\begin{document}

\newtheorem{theorem}{Theorem}
\newtheorem{lemma}[theorem]{Lemma}
\newtheorem{algol}{Algorithm}
\newtheorem{cor}[theorem]{Corollary}
\newtheorem{prop}[theorem]{Proposition}

\newtheorem{proposition}[theorem]{Proposition}
\newtheorem{corollary}[theorem]{Corollary}
\newtheorem{conjecture}[theorem]{Conjecture}
\newtheorem{definition}[theorem]{Definition}
\newtheorem{remark}[theorem]{Remark}

 \numberwithin{equation}{section}
  \numberwithin{theorem}{section}

\newcommand{\comm}[1]{\marginpar{%
\vskip-\baselineskip 
\raggedright\footnotesize
\itshape\hrule\smallskip#1\par\smallskip\hrule}}

\def\sssum{\mathop{\sum\!\sum\!\sum}}
\def\ssum{\mathop{\sum\ldots \sum}}
\def\iint{\mathop{\int\ldots \int}}
\newcommand{\twolinesum}[2]{\sum_{\substack{{\scriptstyle #1}\\
{\scriptstyle #2}}}}

\def\cA{{\mathcal A}}
\def\cB{{\mathcal B}}
\def\cC{{\mathcal C}}
\def\cD{{\mathcal D}}
\def\cE{{\mathcal E}}
\def\cF{{\mathcal F}}
\def\cG{{\mathcal G}}
\def\cH{{\mathcal H}}
\def\cI{{\mathcal I}}
\def\cJ{{\mathcal J}}
\def\cK{{\mathcal K}}
\def\cL{{\mathcal L}}
\def\cM{{\mathcal M}}
\def\cN{{\mathcal N}}
\def\cO{{\mathcal O}}
\def\cP{{\mathcal P}}
\def\cQ{{\mathcal Q}}
\def\cR{{\mathcal R}}
\def\cS{{\mathcal S}}
\def\cT{{\mathcal T}}
\def\cU{{\mathcal U}}
\def\cV{{\mathcal V}}
\def\cW{{\mathcal W}}
\def\cX{{\mathcal X}}
\def\cY{{\mathcal Y}}
\def\cZ{{\mathcal Z}}

\def\C{\mathbb{C}}
\def\F{\mathbb{F}}
\def\K{\mathbb{K}}
\def\Z{\mathbb{Z}}
\def\R{\mathbb{R}}
\def\Q{\mathbb{Q}}
\def\N{\mathbb{N}}
\def\M{\textsf{M}}

\def\({\left(}
\def\){\right)}
\def\[{\left[}
\def\]{\right]}
\def\<{\langle}
\def\>{\rangle}

\def\vec#1{\mathbf{#1}}

\def\e{e}

\def\eq{\e_q}
\def\fS{{\mathfrak S}}

\def\bfalpha{{\boldsymbol \alpha}}

\def\lcm{{\mathrm{lcm}}\,}

\def\fl#1{\left\lfloor#1\right\rfloor}
\def\rf#1{\left\lceil#1\right\rceil}
\def\mand{\qquad\mbox{and}\qquad}

\def\jt{\tilde\jmath}
\def\ellmax{\ell_{\rm max}}
\def\llog{\log\log}

\def\Qbar{\overline{\Q}}

\def\lam{\lambda}
\def\Yildirim{Y{\i}ld{\i}r{\i}m\ }
\def\Lam{\Lambda}
\def\lam{\lambda}

\title[Shifted moments of L-functions and moments of theta functions]
{\bf Shifted moments of L-functions and moments of theta functions}
\author{Marc Munsch}
\address{CRM, Universit\'e de Montr\'eal, 5357 Montr\'eal, Qu\'ebec }
\email{munsch@dms.umontreal.ca}

\date{\today}

\subjclass[2000]{11M06, 11M26, 11L20}
\keywords{$L$- functions, moments, Theta function, Generalized Riemann Hypothesis}

\begin{abstract}
 Assuming the Riemann Hypothesis, Soundararajan showed in \cite{SoundRiemann} that $\displaystyle{\int_{0}^{T} \vert \zeta(1/2 + it)\vert^{2k} \ll T(\log T)^{k^2 + \epsilon}}$   . His method was used by Chandee \cite{ChandeeShifts} to obtain upper bounds for shifted moments of the Riemann Zeta function. Building on ideas of \cite{ChandeeShifts} and \cite{SoundRiemann}, we obtain, conditionally, upper bounds for shifted moments of Dirichlet $L$- functions which allow us to derive upper bounds for moments of theta functions.

\end{abstract}

\bibliographystyle{alpha}
\maketitle

\section{Introduction}

For any integer $q$, we denote by  $X_q$ the group of multiplicative characters 
modulo $q$. Denote by  $X_q^+$ the subset of $X_q$ consisting of primitive even characters $\chi$ (those satisfying $\chi(-1) = 1$) and $X_q^-$ the subset of $X_q$ consisting of primitive odd characters  $\chi$ (those satisfying $\chi(-1) = -1$). 
Furthermore, we use $X_q^*$ to denote the set of primitive characters modulo $q$.

\vspace{2mm}

This paper is divided into two parts. Firstly, we study shifted moments of Dirichlet $L$-functions and secondly, we apply this study to obtain upper bounds on moments of theta functions. \vspace{1mm}

A standard problem in analytic number theory is the study moments of the Riemann zeta function or more generally $L$-functions on the critical line. For instance, it is conjectured (see~\cite[Chapter~5]{MM}) that the moments at the central point satisfy the following asymptotic formulas:
\begin{equation}
\label{eq:Conj L}
M_{2k}(q) =\sum_{\chi\in X_q^*}
\vert L(1/2,\chi )\vert^{2k}
\sim C_k q\log^{k^2}q,\hspace{1cm} C_k>0.
\end{equation}  Even though the asymptotic formulas are not known for $k\geq 3$, lower bounds of the expected order of magnitude 
$$\sum_{\chi\in X_q^*}
\vert L(1/2,\chi )\vert^{2k}
\gg q\log^{k^2}q,$$
have been given by Rudnick and  Soundararajan~\cite{RS1} for $q$ prime. Assuming the Generalized Riemann Hypothesis and building on Soundararajan work \cite{SoundRiemann}, we can show that $\displaystyle{M_{2k}(q) \ll q\log^{k^2+\epsilon}q.}$
We can generalize in some way these moments using shifts and consider

\begin{equation} \label{shifts}\sum_{\chi \in X_q^*} \left\vert L\left(\frac{1}{2}+ it_1,\chi\right)\cdots L\left(\frac{1}{2}+ it_{2k},\chi\right)\right\vert \end{equation} where $(t_1,\cdots,t_{2k})$ is a sequence of real numbers. It is expected that if the $t_i$ are reasonably small, we should be able to obtain an asymptotic formula for (\ref{shifts}) (see for instance \cite{Conrey} for a survey about shifted moments in families of $L$- functions). Although, we cannot prove such a general result even assuming GRH, we are able to give a conditional upper bound of nearly the conjectured order of magnitude.

\begin{theorem}\label{shiftedmoments}Assume that the Dirichlet $L$-functions modulo $q$ satisfy the Generalized Riemann Hypothesis. Suppose $q$ is large and the $2k$-tuple $t=(t_1,\cdots, t_{2k})$ is such that $t_i \ll \log q$. Then, for all $\epsilon>0$, we have the uniform bound:

$$ \sum_{\chi \in X_q^*} \left\vert L\left(\frac{1}{2}+it_1,\chi\right)\cdots L\left(\frac{1}{2}+it_{2k},\chi\right)\right\vert \ll \phi(q)(\log q)^{k/2+\epsilon} \prod_{i<j} E_{i,j}$$where

$$E_{i,j}=\begin{cases}\left(\min\left\{\frac{1}{\vert t_i-t_j\vert}, \log q\right\}\right)^{1/2}  &\mbox{if   } \vert t_i-t_j\vert \leq \frac{1}{100}, \\
\sqrt{\log \log q}&\mbox{if   }\vert t_i-t_j\vert \geq \frac{1}{100}.\\ \end{cases}$$

\end{theorem}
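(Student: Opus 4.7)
The plan is to adapt the Soundararajan--Chandee method for shifted moments to the family $X_q^*$, with orthogonality of primitive characters playing the role of near-diagonal cancellation in the $t$-integral of \cite{ChandeeShifts}. The starting point is a pointwise upper bound, valid on GRH, of Soundararajan type:
$$\log|L(\tfrac12+it,\chi)| \leq \Re\sum_{n\leq x}\frac{\Lambda(n)\chi(n)n^{-it}}{n^{1/2+\lambda/\log x}\log n}\frac{\log(x/n)}{\log x} + \frac{(1+\lambda)\log q}{\log x} + O(1),$$
proved as in \cite{SoundRiemann} via the explicit formula for $(L'/L)(s,\chi)$, with $\log T$ replaced by $\log q$. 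Applying this at $t=t_j$ for $j=1,\dots,2k$, adding, absorbing the higher prime-power contributions ($m\geq 2$) into $O(k)$, and choosing $x$ as a small power of $q$, reduces the theorem to bounding
$$\sum_{\chi\in X_q^*}\exp\bigl(\Re P(\chi)\bigr), \qquad P(\chi):=\sum_{p\leq x}\frac{\chi(p)S_t(p)w(p)}{p^{1/2+\lambda/\log x}},\quad S_t(p):=\sum_{j=1}^{2k}p^{-it_j},$$
where $w$ is the Soundararajan smoothing weight.

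For this exponential sum I would split $\chi\in X_q^*$ dyadically according to the size of $\Re P(\chi)$, control each level via Chebyshev's inequality, and use the high-moment estimate
$$\sum_{\chi\in X_q^*}|P(\chi)|^{2m}\ll \phi(q)\,\frac{(2m)!}{2^m m!}\,\mathcal{V}(t)^m, \qquad \mathcal{V}(t):=\sum_{p\leq x}\frac{|S_t(p)|^2w(p)^2}{p^{1+2\lambda/\log x}},$$
valid for $x^m<q^{1-\epsilon}$ by orthogonality of characters: in the expansion of $|P(\chi)|^{2m}$, only those terms with $p_{i_1}\cdots p_{i_m}=p_{j_1}\cdots p_{j_m}$ survive the $\chi$-sum. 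This plays the role of the fourth-moment estimate for Dirichlet polynomials used in \cite{SoundRiemann}.

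The decisive step is the evaluation of $\mathcal{V}(t)$: expanding
$$|S_t(p)|^2=2k+2\sum_{i<j}\cos\bigl((t_i-t_j)\log p\bigr),$$
one reduces to the pair sums $\sum_{p\leq x}\cos(\tau\log p)/p$ which are estimated by Mertens and the PNT in three regimes: $|\tau|\leq 1/\log x$ (sum $\sim\log\log x$); $1/\log x\leq|\tau|\leq 1/100$ (sum $=\log(1/|\tau|)+O(1)$); and $|\tau|\geq 1/100$ (bounded by $\log\log q+O(1)$ via the trivial inequality $|\cos|\leq 1$, using $x\leq q^{O(1)}$). These three regimes produce exactly the three branches in the definition of $E_{i,j}$. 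Balancing Chebyshev against the moment bound, optimizing $m$ and the dyadic threshold $V$ appropriately, and absorbing the $\exp\bigl(2k(1+\lambda)\log q/\log x\bigr)$ factor from Step~1 into the $(\log q)^\epsilon$ cushion by a suitable choice of $x$, yields $\sum_\chi\exp(\Re P(\chi))\ll\phi(q)(\log q)^{k/2+\epsilon}\prod_{i<j}E_{i,j}$, which is the stated inequality. The main obstacle, as in \cite{ChandeeShifts}, is the uniformity in the shift vector $t$ when very tight clusters and wide gaps coexist among the $t_i$: the parameters $(x,\lambda,m,V)$ must be tuned so that the dyadic scheme, the moment inequality, and the pairwise Mertens evaluations all hold simultaneously across the full range of pair separations.
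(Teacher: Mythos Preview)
Your approach is essentially the paper's: the Soundararajan pointwise bound under GRH (the paper's Proposition~\ref{large values}), reduction to a Dirichlet polynomial $P(\chi)$ over primes, high moments of $P(\chi)$ via orthogonality of characters (Lemma~\ref{momentsprimes}), and evaluation of the variance $\mathcal{V}(t)$ through the cosine sums $\sum_{p\le x}\cos(\tau\log p)/p$ (Lemma~\ref{cos}), all assembled into a large-values count $N_t(q,V)$ and integrated in $V$. The paper also splits the prime sum at $z=x^{1/\log\log q}$ into $S_1(\chi)+S_2(\chi)$ so that the constraint $x^m<q$ in the moment lemma can be met with different choices of $m$ on the two ranges, but this is a standard refinement you would presumably insert.

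Two points in your write-up do not work as stated. First, the prime-square contribution cannot be absorbed into $O(k)$: when $\chi^2=\chi_0$ the sum $\sum_{p^2\le x}\chi(p)^2/p$ is of size $\tfrac12\log\log q$. The paper removes the quadratic characters at the outset (their total contribution to the moment is $q^{o(1)}$ by Corollary~\ref{upperboundline}), and for the remaining $\chi$ invokes GRH for $L(s,\chi^2)$ to show the prime squares contribute only $O(\log\log\log q)$ (Lemma~\ref{primepowers}). Second, and more consequential for the shape of the bound, the trivial inequality $|\cos|\le 1$ in the regime $|\tau|\ge 1/100$ only gives $\sum_{p\le x}\cos(\tau\log p)/p\le \log\log q+O(1)$; feeding this into $\mathcal{V}(t)$ would produce $E_{i,j}=\sqrt{\log q}$ in that range, not the claimed $\sqrt{\log\log q}$. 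To recover the stated $E_{i,j}$ you need genuine cancellation: the paper uses the estimate $\sum_{p\le z}\cos(a\log p)/p\le \log\log(2+|a|)+O(1)$ (quoted from \cite{GS}, a consequence of the prime number theorem with classical error term), which for $|a|=|t_i-t_j|\ll\log q$ gives $\log\log\log q$ and hence $E_{i,j}=\sqrt{\log\log q}$.
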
 This can be related to the main result of \cite{ChandeeShifts} and enlightens the fact that $L\left(\frac{1}{2}+it_i,\chi\right)$ and $L\left(\frac{1}{2}+it_j,\chi\right)$ are essentially correlated when $\vert t_i -t_j\vert \approx \frac{1}{\log q}$ and ``independent" as long as $\vert t_i - t_j \vert$ is significantly larger than $1/\log q$.

\vspace{2mm}

For real $x> 0$ and $\eta\in\{ 0,1\}$ we set 
$$\theta (\eta, x,\chi)
=\sum_{n=1}^\infty \chi (n)n^{\eta}e^{-\pi n^2x/q}, 
\qquad \chi\in X_q. 
$$ We note that, if we set 
$\eta_{\chi}=1$ if $\chi$ is odd and $\eta_{\chi}=0$ otherwise,
then 
$$\theta_{q} (\eta_{\chi}, x,\chi)=\theta_{q} (x,\chi)
$$
is the classical theta-function of the character $\chi$ (see~\cite{Dav} for a background and basic properties). We can express these values using Mellin transforms of $L$- functions which make the use of our result about moments of shifted $L$- functions very appropriate.
\vspace{2mm}

 When computing the root number of $\chi$ appearing in the functional equation of the associated Dirichlet $L$- function, the question of whether $\theta_{q} (1,\chi) \neq 0$ appears naturally (see~\cite{Lou99} for details). Numerical computations led to the conjecture that it never happens if $\chi$ is primitive (see~\cite{CZ} for a counterexample with $\chi$ unprimitive). In order to investigate the non-vanishing of theta functions at their central point, the study of moments has been initiated in \cite{LoMu}, \cite{LoMu1} and \cite{MS}. Let us define 
$$S_{2k}^+(q) =\sum_{\chi\in X_q^+\backslash \chi{_0}}\vert\theta (1,\chi)\vert^{2k} \mand S_{2k}^-(q) =\sum_{\chi\in X_q^-}\vert\theta (1,\chi)\vert^{2k}.$$  It is conjectured in \cite{MS}, based on numerical computation and some theoretical support, that 
\begin{equation}
\label{eq:Conj TS}
\begin{split}
&S_{2k}^+(q)\sim a_k\phi(q)q^{k/2}\(\log q\)^{(k-1)^2},\\
& S_{2k}^-(q)\sim b_k\phi(q)q^{3k/2}\(\log q\)^{(k-1)^2}
\end{split}
\end{equation} 
for some positive constants $a_k$ and $b_k$, depending only on $k$.  Recently, a lower bound of expected order for $S_{2k}^+(q)$ and $S_{2k}^-(q)$ has been proven unconditionally in \cite{MS}. In the second part of the paper, we will derive upper bounds giving good support towards Conjecture (\ref{eq:Conj TS}). \newpage \noindent Precisely, we prove

\begin{theorem}\label{upperGRH} Assume the Generalized Riemann Hypothesis for all Dirichlet $L$- functions modulo $q$. Then, for all $\epsilon>0$, we have%
$$S_{2k}^+(q) \ll \phi(q)q^{k/2}(\log q)^{(k-1)^2+\epsilon} \mand S_{2k}^-(q) \ll \phi(q)q^{3k/2}(\log q)^{(k-1)^2+\epsilon}.$$  

  
   \end{theorem}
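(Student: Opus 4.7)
My plan is to exploit the Mellin--Barnes bridge from $\theta$-values to $L$-values and then feed the result into Theorem~\ref{shiftedmoments}. Applying $e^{-y}=(2\pi i)^{-1}\int_{(c)}\Gamma(s)y^{-s}\,ds$ term by term to $\theta(\eta,1,\chi)=\sum_n \chi(n) n^\eta e^{-\pi n^2/q}$ and swapping sum and integral gives, after shifting the contour to $\Re s=\sigma:=(1+2\eta)/4$ (no residues are crossed since $L(\cdot,\chi)$ is entire for primitive $\chi\neq\chi_0$),
$$\theta(\eta,1,\chi)=\frac{1}{4\pi}\int_{-\infty}^{\infty}\Gamma(\sigma+it/2)(q/\pi)^{\sigma+it/2}L(1/2+it,\chi)\,dt.$$
Since $\sigma=1/4$ when $\chi$ is even and $\sigma=3/4$ when $\chi$ is odd, the factor $q^{2k\sigma}$ that appears after the $2k$-th power produces exactly the target $q^{k/2}$ and $q^{3k/2}$. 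Taking absolute values, raising the pointwise bound to the $2k$-th power, and expanding yields
$$|\theta(1,\chi)|^{2k}\ll q^{2k\sigma}\int_{\R^{2k}}\prod_{i=1}^{2k}|\Gamma(\sigma+it_i/2)|\,|L(1/2+it_i,\chi)|\,d\vec t.$$

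Next I would sum over $\chi\in X_q^{+}\setminus\{\chi_0\}$ (respectively $X_q^{-}$) and swap sum and integral. Since $X_q^{\pm}\subset X_q^{*}$ and the integrand is non-negative, Theorem~\ref{shiftedmoments} bounds the inner character sum. The Gamma factor has exponential decay $|\Gamma(\sigma+it/2)|\ll e^{-\pi|t|/4}$, so truncating the integration to $|t_i|\le\log q$ (where the hypothesis of Theorem~\ref{shiftedmoments} applies) incurs only a super-polynomially small error. This would produce
$$S_{2k}^{\pm}(q)\ll q^{2k\sigma}\phi(q)(\log q)^{k/2+\epsilon}\,\cI,\qquad \cI:=\int_{[-\log q,\log q]^{2k}}\prod_i|\Gamma(\sigma+it_i/2)|\prod_{i<j}E_{i,j}\,d\vec t.$$

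What remains is the combinatorial integral bound $\cI\ll(\log q)^{(k-1)^2-k/2+\epsilon}$. The Gamma factor effectively restricts each $t_i$ to an interval of length $O(1)$. I would decompose the cube dyadically according to the scales $\delta\in[(\log q)^{-1},1]$ of the gaps $|t_i-t_j|$. The extremal configuration is the one in which all $2k$ variables lie in a single window of length $\delta_\ast\asymp(\log q)^{-1}$: each of the $\binom{2k}{2}=k(2k-1)$ pairs then contributes $E_{i,j}\asymp\sqrt{\log q}$, the window has volume $\asymp(\log q)^{-(2k-1)}$, and the combined contribution works out to
$$(\log q)^{k(2k-1)/2-(2k-1)}=(\log q)^{(2k-1)(k-2)/2}=(\log q)^{(k-1)^2-k/2}.$$
A standard dyadic summation over cluster scales and partition types should show this to be dominant up to $(\log q)^\epsilon$. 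Combined with the outer factors $(\log q)^{k/2+\epsilon}$ and $q^{2k\sigma}$, the target exponents $(k-1)^2+\epsilon$ then follow.

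The hard part will be the combinatorial bookkeeping for $\cI$: enumerating partitions of $\{1,\dots,2k\}$ into clusters, organizing nested hierarchies of dyadic scales, and verifying that no more elaborate clustering pattern beats the single cluster of radius $1/\log q$. The scheme will parallel that of Chandee~\cite{ChandeeShifts}, with the Gamma factor playing the role of her compactly supported weight in localizing the shifts $t_i$.
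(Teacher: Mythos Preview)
Your overall strategy matches the paper's: Mellin inversion to pass from $\theta$ to $L$, truncation using the Gamma decay, expansion of the $2k$-th power, swapping the character sum inside, application of Theorem~\ref{shiftedmoments}, and reduction to a combinatorial integral over the $2k$ shifts. Your target exponent computation $(2k-1)(k-2)/2=(k-1)^2-k/2$ for the extremal single-cluster configuration is correct, and so is the identification of the all-clustered configuration as the dominant one.

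Where you diverge is in the treatment of $\cI$. The paper does not organise the integral by cluster partitions and dyadic scales. Instead it orders $t_1\le\cdots\le t_{2k}$, assigns to each $\bar t$ a \emph{type} $\bar j=(j_1,\dots,j_{2k-1})$ where $j_i$ is the first index $>i$ with $|t_i-t_{j_i}|>1/\log q$, and for each $i$ bounds $\prod_{j>i}E_{i,j}$ either trivially by $(\log q)^{(2k-i)/2}$ (if $j_i>i+1$) or by $|t_i-t_{i+1}|^{-(2k-i)/2}(\log\log q)^{(2k-i)/2}$ (if $j_i=i+1$). After the linear change $u_i=t_i-t_{i+1}$ the integral factors and each $u_i$ is integrated explicitly over $[-1/\log q,0]$ or $[-\log^\epsilon q,-1/\log q]$; a short case analysis (on whether $j_{2k-1}=2k$, and if so whether $j_1=2$) handles the one troublesome exponent $1/2$ on $u_{2k-1}$ by borrowing a factor $|t_{2k-1}-t_{2k}|^{-1/2}$ from the $i=1$ product via the monotonicity $|t_1-t_{2k}|\ge|t_{2k-1}-t_{2k}|$. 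This is more elementary than a full hierarchical dyadic scheme and avoids enumerating set partitions altogether; on the other hand your clustering picture explains more transparently \emph{why} the single cluster at scale $1/\log q$ is extremal, something that in the paper's argument only emerges at the end of the arithmetic. A minor further difference: the paper truncates at height $\log^\epsilon q$ (so that $\Gamma$ is bounded on the hypercube and can be discarded), whereas you truncate at $\log q$ and keep the Gamma weight inside $\cI$ as a localiser; both are fine. Either route yields the claimed bound.
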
 
   
   \vspace{1mm}
 This can be related to recent results of~\cite{HarperMaks} (see also~\cite{Heap}), where the authors obtain the asymptotic behaviour of a Steinhaus random multiplicative function (basically a multiplicative random variable whose values at prime integers are uniformly distributed on the unit circle). This can be viewed as a random model for $\theta_{q} (x,\chi)$. In fact, the rapidly decaying factor $e^{-\pi n^2/q}$ is mostly equivalent to restrict the sum over integers $n \le n_0(q)$ for some $n_0(q) \approx \sqrt{q}$ and the averaging behavior of $\chi(n)$ with $n\ll q^{1/2}$ is essentially similar to that of a Steinhaus random multiplicative function. Hence, these results are a good support for Conjecture~\eqref{eq:Conj TS}. Upper bounds of Theorem \ref{upperGRH} together with lower bounds obtained in \cite[Theorem $1.1$]{MS} confirm this heuristic.

\vspace{1mm}
The method of the proof of Theorem \ref{upperGRH} relies on the bound obtained for moments of shifted $L$- functions.

\section{Moments of shifted L-functions}

 In that section, we mostly adapt results and ideas of \cite{SoundRiemann} to our situation. These techniques build on ideas of Selberg about the distribution of $\vert \log \zeta(1/2+it)\vert$ (see \cite{Selberg}).The starting point is the following equality
 
 $$ \int_{T}^{2T} \vert \zeta(1/2 + it)\vert^{2k}  dt = -\int_{-\infty}^{+\infty} e^{2kV} d \text{ meas}(S(T,V))=2k\int_{-\infty}^{+\infty} e^{2kV} \text{meas}(S(T,V)) dV$$ where $S(T,V)=\left\{t\in \left[T,2T\right]: \log \vert \zeta(1/2+it)\vert \geq V\right\}$. From this, an upper bound for the moment can be directly deduced from the upper bound of $\text{meas } (S(T,V))$. In our case, we have to study the frequency (in terms of characters) of large values of $L$-functions. Thus, we will proceed in the same way by estimating the measure of  
$$S_t(q,V)=\left\{\chi (\bmod \,q), \chi^2 \neq \chi_0,: \log\left\vert L\left(\frac{1}{2}+it_1,\chi\right)\right\vert+\cdots+\log\left\vert L\left(\frac{1}{2}+it_{2k},\chi\right)\right\vert \geq V\right\}$$ for $V>0$ and a $2k$-tuple $t=(t_1,\cdots, t_{2k})$. Most of the work consists in keeping the dependence both in terms of the modulus $q$ and the height of the shifts. If the shifts are not too large, we are able to obtain a quasi-optimal upper bound under the Generalized Riemann Hypothesis. This result will be sufficient for our application to moments of theta functions. It should be noticed that the recent method of Harper (see \cite{Harper}) may be used to remove the $\epsilon$ factor in Theorem \ref{shiftedmoments}.

\vspace{5mm}

 Let set $N_t(q,V)= \# S_t(q,V)$. We can express the shifted moments of $L$- functions as the following

\begin{align}\label{momentshifted}
\sum_{\chi \in X_q^*}\left\vert L\left(\frac{1}{2}+it_1,\chi\right)\cdots L\left(\frac{1}{2}+it_{2k},\chi\right)\right\vert = \sum_{\chi \in X_q^*} e^{\log\left\vert L(\frac{1}{2}+it_1,\chi)\right\vert+\cdots+\log\left\vert L(\frac{1}{2}+it_{2k},\chi)\right\vert}  \notag \\
=\sum_{\chi \in X_q^*}\int_{-\infty}^{\log\left\vert L(\frac{1}{2}+it_1,\chi)\right\vert+\cdots+\log\left\vert L(\frac{1}{2}+it_{2k},\chi)\right\vert} e^{V}dV=\int_{-\infty}^{+\infty} e^{V} N_t(q,V)dV + q^{o(1)}.
\end{align} The error term comes from the contribution of quadratic characters which can easily be bounded, using Corollary \ref{upperboundline} by
$$O\left(\int_{-\infty}^{4ck\frac{\log q}{\log \log q}}e^V dV \right) \ll q^{o(1)}.$$  Hence, the problem of estimating the moments boils down to getting precise bounds for $N_t(q,V)$. In order to do that, let us define the following quantity

$$W=2k \log\log q + 2\sum_{i,j \atop i<j}F_{i,j}$$ where

$$F_{i,j}=\begin{cases}\log\left(\min\left\{\frac{1}{\vert t_i-t_j\vert}, \log q\right\}\right)  &\mbox{if   } \vert t_i-t_j\vert \leq \frac{1}{100}, \\
\log \log \log q&\mbox{if   }\vert t_i-t_j\vert \geq \frac{1}{100}.\\ \end{cases}$$
 
  We will prove the following theorem which estimates the measure of $S_t(q,V)$ for large $q$ and all $V$.

\begin{theorem}\label{largevalues}Assume that the Dirichlet $L$-functions modulo $q$ satisfy the Generalized Riemann Hypothesis. Suppose that $\vert t\vert\leq T \leq \log^A q$ where $A>0$ and $V$ is a large real number. If $4\sqrt{\log \log q}\leq V\leq W$ then

$$N_t(q,V) \ll \phi(q)\frac{V}{\sqrt{W}}\exp\left(-\frac{V^2}{W}\left(1-\frac{18k}{5\log W}\right)^2\right);$$ if $W<V<\frac{1}{4k}W\log W$ we have
$$N_t(q,V)\ll \phi(q)\frac{V}{\sqrt{W}}\exp\left(-\frac{V^2}{W}\left(1-\frac{18kV}{5W\log W}\right)^2\right);    $$ and if $\frac{1}{4k}W\log W<V$ we have 
$$N_t(q,V)\ll \phi(q)\exp\left(-\frac{V}{801k}\log V\right)   .$$
\end{theorem}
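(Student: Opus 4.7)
The plan is to adapt the Soundararajan--Chandee large-deviation machinery (see \cite{SoundRiemann,ChandeeShifts}) to the average over primitive characters, retaining uniformity in the shifts $t_1,\ldots,t_{2k}$. Under GRH, one has a pointwise upper bound of the form
$$\log|L(1/2+it,\chi)| \;\leq\; \mathrm{Re}\sum_{n\leq x}\frac{\Lambda(n)}{n^{1/2+it}\log n}\,\frac{\log(x/n)}{\log x} \;+\; \frac{(1+\lambda)\log q}{\log x}\;+\;O(1),$$
uniformly for $2\leq x\leq q^2$, $|t|\leq \log^A q$, $\chi^2\neq\chi_0$, and a free parameter $\lambda\geq 0$; this is the analog for Dirichlet $L$-functions of the main inequality in \cite{SoundRiemann}, and is the preliminary proposition already set up in this section. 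Summing over $i=1,\ldots,2k$, absorbing the prime-square contribution using $\sum_p p^{-1-\sigma}\ll 1$, and choosing $\log x$ of order $k\log q / V$ so that the ramified term eats at most half of $V$, every $\chi\in S_t(q,V)$ satisfies the short Dirichlet polynomial inequality
$$P_t(\chi) \;:=\; \sum_{i=1}^{2k}\mathrm{Re}\sum_{p\leq x}\frac{\chi(p)}{p^{1/2+it_i}}\,\frac{\log(x/p)}{\log x} \;\geq\; V\bigl(1-\tfrac{c_1 k}{\log W}\bigr),$$
reducing the task to bounding the number of characters on which $P_t$ is large.

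Such a bound comes from Markov's inequality applied to an even moment of $P_t(\chi)$. Expanding $P_t(\chi)^{2\ell}$ and invoking orthogonality of primitive characters modulo $q$, off-diagonal prime tuples of total length at most $2\ell$ whose product is not $\equiv 1\pmod q$ contribute nothing provided $x^{2\ell}\leq q^{1-\eta}$; the surviving diagonal configurations produce, by the standard Wick-type combinatorics, the near-Gaussian upper bound
$$\frac{1}{\phi(q)}\sum_{\chi\in X_q^*}P_t(\chi)^{2\ell}\;\leq\;\frac{(2\ell)!}{2^\ell\,\ell!}\,B^\ell\,(1+O(q^{-\eta})),$$
with
$$B \;=\; \sum_{i,j=1}^{2k}\sum_{p\leq x}\frac{\cos((t_i-t_j)\log p)}{p}\Bigl(\frac{\log(x/p)}{\log x}\Bigr)^{\!2}.$$
The diagonal terms ($i=j$) contribute $2k\log\log q + O(k)$ by Mertens; the off-diagonal terms are handled by splitting at $p=\exp(1/|t_i-t_j|)$ and using the prime number theorem on GRH to control the oscillatory tail, yielding $\log\min(1/|t_i-t_j|,\log q)$ when $|t_i-t_j|\leq 1/100$ and $O(\log\log\log q)$ in the decorrelated regime $|t_i-t_j|\geq 1/100$. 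Thus $B=W+O(k^2)$, matching the theorem's $W$.

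The final bound results from an optimal choice of $\ell$ in Markov. In the moderate range $4\sqrt{\log\log q}\leq V\leq W$, take $\ell$ just below $V^2/W$: Stirling gives $(2\ell)!/(2^\ell \ell!)\sim\sqrt 2\,(2\ell/e)^\ell$, Markov produces the Gaussian factor $\exp(-V^2/W)$, and the prefactor $V/\sqrt W$ arises from Stirling at $\ell\asymp V^2/W$; the size constraint $x^{2\ell}\leq q^{1-\eta}$ is comfortably met since $\log x\asymp k\log q/V$ and $\ell\leq V$. For the intermediate range $W<V<W\log W/(4k)$, the ideal $\ell\approx V^2/W$ violates that constraint, forcing one to freeze $\ell$ at the maximal admissible value $\ell_0\asymp V\log W/(kW)$; this degrades the factor $(1-c_1 k/\log W)$ to the stated $(1-18kV/(5W\log W))$. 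For $V\geq W\log W/(4k)$ the Gaussian bound is no longer sharp, and one uses $\ell=\lfloor\log q/(4k\log x)\rfloor$ directly to obtain the exponential decay $\exp(-V\log V/(801k))$. The principal technical obstacle is the uniform evaluation of the shifted prime sum $\sum_{p\leq x}\cos(\tau\log p)/p$ across the three regimes $|\tau|\leq 1/\log q$, $1/\log q<|\tau|\leq 1/100$, and $|\tau|>1/100$, so that $B$ matches $W$ with errors uniform in $k$ and the shifts; once this variance computation is secured, the rest of the argument is a careful bookkeeping of the three choices of $(\ell,x)$.
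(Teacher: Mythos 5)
Your architecture is close to the paper's and captures most of the essential ingredients (the GRH pointwise bound on $\log|L|$ via a Dirichlet polynomial over primes, reduction to a prime sum by discarding prime powers, even moments plus Markov with orthogonality of characters, and the evaluation of the variance $B$ as $W$ via Lemma~\ref{cos}). However, there is one concrete gap: your claim that the orthogonality constraint $x^{2\ell}\le q^{1-\eta}$ is ``comfortably met'' in the regime $4\sqrt{\log\log q}\le V\le W$ is false. With $\log x\asymp k\log q/V$ and $\ell\approx V^2/W$, one has $\ell\log x\asymp Vk\log q/W$, and since $W\asymp 2k\log\log q$ this is only $\le(1-\eta)\log q$ when $V\lesssim\log\log q$; the regime extends up to $V\approx W\approx 2k\log\log q$, where the constraint fails by a factor of roughly $k$. (The same miscount appears if one uses the paper's $\log x\asymp(\log W)\log q/V$: then $\ell\log x\asymp V(\log W)\log q/W$, again too large for $V$ near $W$.)

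The paper's proof resolves exactly this tension, and the device is essential, not cosmetic: it splits the prime Dirichlet polynomial at $z=x^{1/\log\log q}$, writing it as $S_1(\chi)$ (primes $p\le z$) plus $S_2(\chi)$ (primes $z<p\le x$), and observes that $\chi\in S_t(q,V)$ forces $S_1(\chi)\ge V_1=V(1-9k/(5A))$ or $S_2(\chi)\ge V_2=kV/(5A)$. For $S_1$ one can afford a large exponent $\ell\asymp V_1^2/W$ because $\log z=\log x/\log\log q$, so the constraint becomes $\ell<\log q/\log z\asymp V\log\log q/A$, which is comfortably satisfied (this is the paper's footnoted point that $W/4k^2\le\log\log q$). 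For $S_2$ one uses a much smaller exponent $\ell\asymp V/A$ so that $x^\ell<q$ holds, and the variance of $S_2$ over $z<p\le x$ is only $O(k^2\log\log\log q)$, which is what makes the $S_2$ contribution negligible. Your single-polynomial variant, with $\ell$ merely capped when the constraint binds, does not produce the first-range bound $\exp(-(V^2/W)(1-18k/(5\log W))^2)$; for $V$ near $W$ you would be forced down to $\ell\ll V/(k\log W)$-sized moments and lose the Gaussian saving. To repair the argument you need to incorporate the $S_1/S_2$ split (or an equivalent two-scale decomposition).

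Two smaller points: the paper's moment lemma (Lemma~\ref{momentsprimes}) gives the factor $\ell!$, not $(2\ell)!/(2^\ell\ell!)=(2\ell-1)!!$, for the $2\ell$-th moment — this is the standard character-sum version of Soundararajan's Lemma~3 and saves a factor of roughly $2^\ell$, which matters for the prefactor; and the pointwise Proposition~\ref{large values} requires $\lambda\ge\lambda_0\approx0.56$ (so that the $F_\chi(s_0)$ term is nonpositive), not $\lambda\ge0$ as you wrote.
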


\vspace{1mm}
\textit{\bf{Proof of Theorem \ref{shiftedmoments}}} Inserting the bounds of Theorem \ref{largevalues} in Equation (\ref{momentshifted}) gives the upper bound in Theorem \ref{shiftedmoments}. Precisely, it is appropriate for this computation to use Theorem \ref{largevalues} in the weakest form 
 
 $$N_t(q,V) \ll \phi(q)(\log q)^{o(1)} exp(-V^2/W) \text{  for }  3\leq V\leq 200W, $$
$$N_t(q,V) \ll \phi(q)(\log q)^{o(1)} exp(-2V) \text{  for }  V> 200W .$$ This allows us to bound the moments by $\phi(q)(\log q)^{o(1)}e^{W/4}$ which concludes the proof.

\subsection{Preliminary results}

We regroup in that subsection all the technical results that we will use in the proof of Theorem \ref{largevalues}. These are mainly suitable adaptations to our case of Lemmas of \cite{SoundRiemann}. In the sequel, we will always write $s=\sigma + it$ for a complex number $s$. We write $\log^{+}(x):=\max (\log x,0)$.

\begin{lemma} \label{upsigma}
Unconditionally, for any $s$ not coinciding with $1$, $0$ or a zero of $L(s,\chi)$, and for any $x \geq 2,$ we have
\begin{eqnarray*}
-\frac{L'}{L}(s,\chi) &=& \sum_{n \leq x} \frac{\chi(n)\Lam(n)}{n^{s}}\frac{\log\frac{x}{n}}{\log x} + \frac{1}{\log x}\left( \frac{L'}{L}(s,\chi) \right)' + \frac{1}{\log x} \sum_{\rho \neq 0,1} \frac{x^{\rho - s}}{(\rho - s)^2} \\
&& + \frac{1}{\log x} \sum_{n=0}^{\infty} \frac{x^{-2n-a-s}}{(2n + a +s)^2}.
\end{eqnarray*}
\end{lemma}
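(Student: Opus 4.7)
My plan is to derive the identity by Perron-style contour integration combined with the residue theorem, following the standard template used in Soundararajan~\cite{SoundRiemann}. The central object is
$$I(s,x) := \frac{1}{2\pi i}\int_{c-i\infty}^{c+i\infty} \left(-\frac{L'}{L}(s+w,\chi)\right)\frac{x^w}{w^2}\,dw,$$
with $c > 0$ chosen so that $\mathrm{Re}(s+w) > 1$ on the line of integration. I would compute $I(s,x)$ in two different ways and equate.

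For the first evaluation I would plug in the absolutely convergent Dirichlet series $-L'/L(s+w,\chi) = \sum_n \chi(n)\Lam(n) n^{-s-w}$, interchange summation and integration, and apply the standard Mellin kernel identity
$$\frac{1}{2\pi i}\int_{c-i\infty}^{c+i\infty}\frac{y^w}{w^2}\,dw = \log^{+}(y),$$
valid for any $y>0$ and $c>0$. This yields
$$I(s,x) = \sum_{n \leq x}\frac{\chi(n)\Lam(n)}{n^s}\log\frac{x}{n}.$$
For the second evaluation I would push the line of integration from $\mathrm{Re}(w)=c$ all the way to $\mathrm{Re}(w)=-\infty$ and collect residues. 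The double pole at $w=0$ supplied by $1/w^2$ contributes, via the Taylor expansion of $-\frac{L'}{L}(s+w,\chi)\,x^w$ around $w=0$, the term
$$-\frac{L'}{L}(s,\chi)\log x \;-\; \Bigl(\frac{L'}{L}\Bigr)'(s,\chi).$$
Each non-trivial zero $\rho$ of $L(\cdot,\chi)$ produces a simple pole of the integrand at $w=\rho - s$ with residue $-x^{\rho-s}/(\rho-s)^2$, and each trivial zero at $-(2n+a)$ (with $a\in\{0,1\}$ encoding the parity of $\chi$) produces a simple pole at $w=-(2n+a)-s$ with residue $-x^{-2n-a-s}/(2n+a+s)^2$. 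Equating the two expressions for $I(s,x)$, dividing by $\log x$, and rearranging gives precisely the stated identity.

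The only step demanding real care is the justification of the contour shift to $-\infty$. For this I would work with a sequence of rectangular truncations whose vertical left sides sit on $\mathrm{Re}(w)=-N-\tfrac{1}{2}$ for integers $N\to\infty$ and whose horizontal sides are chosen to avoid zeros of $L(\cdot,\chi)$, then bound $L'/L(s+w,\chi)$ on these contours using its standard partial-fraction expansion. The $1/w^2$ factor secures both the vanishing of the contributions from the horizontal and far-left pieces and the absolute convergence of the resulting residue sums. Modulo this routine analytic bookkeeping, the residue theorem delivers the formula immediately.
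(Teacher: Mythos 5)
Your contour-integration derivation is correct, and the residue bookkeeping checks out: the double pole at $w=0$ yields $-\frac{L'}{L}(s,\chi)\log x - \bigl(\frac{L'}{L}\bigr)'(s,\chi)$, each nontrivial zero $\rho$ contributes $-x^{\rho-s}/(\rho-s)^2$, each trivial zero at $-(2n+a)$ contributes $-x^{-2n-a-s}/(2n+a+s)^2$, and equating this with the $\log^{+}$-smoothed Perron evaluation and dividing by $\log x$ reproduces the stated identity. However, the paper itself offers no proof: the lemma is cited verbatim as Lemma~2.4 of \cite{ChandeeCritical}, specialized with $a(n)=\chi(n)\Lambda(n)$, $d=1$, and $k(j)=a$, where $a\in\{0,1\}$ is the parity of $\chi$. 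Chandee's general lemma is established by essentially the method you outline, so you have reconstructed the derivation behind the cited reference rather than found a genuinely new route. What the citation buys the paper is that the analytic bookkeeping you defer to the end --- growth estimates for $L'/L$ on horizontal segments chosen to avoid zeros, decay of the far-left contour, absolute convergence of the residue sums --- has already been carried out once for a general class of $L$-functions; a self-contained version of your argument would need to supply those verifications explicitly for Dirichlet $L$-functions, and you correctly identify that as the one step requiring real care.
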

 
\begin{proof}This is Lemma $2.4$ of \cite{ChandeeCritical} with $a(n)=\chi(n)\Lam(n)$, $d=1$ and $k(j)=a$ (here $a = 0$ or $1$ is the number given by $\chi(-1) = (-1)^a$). \end{proof}

\begin{proposition}\label{large values} Assume GRH for all Dirichlet L-functions of modulus $p$. Let $T$ be a parameter and let $x\ge 2$.  Let $\lambda_0=0.56\ldots$ 
denote the unique positive real number satisfying $e^{-\lambda_0} = \lambda_0$.   
For all $\lambda \ge \lambda_0$, the following estimate 
$$
\log |L(\sigma + it,\chi)| \le \text{\rm Re } \sum_{n\le x} \frac{\chi(n)\Lam(n)}{n^{\frac{1}{2}+ \frac{\lam}{\log x} +it} \log n} 
\frac{\log (x/n)}{\log x} + \frac{(1+\lam)}{2} \frac{\log (q)+\log^{+}(T)}{\log x} + O\Big( \frac{1}{\log x}\Big)
$$ holds uniformly for $\vert t\vert \leq T$ and $1/2\leq \sigma\leq \sigma_0=1/2 + \frac{\lambda}{\log x}$.

\end{proposition}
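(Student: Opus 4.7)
The plan is to adapt Soundararajan's argument from \cite{SoundRiemann} for $\zeta$ to Dirichlet $L$-functions, carefully tracking both the modulus $q$ and the height $T$. The strategy proceeds in two stages: (i) establish the bound at the boundary $\sigma=\sigma_0$ by integrating the identity of Lemma~\ref{upsigma} along a horizontal ray, then (ii) reduce the case $\sigma\in[1/2,\sigma_0]$ to this boundary case via the Hadamard factorisation of $L(s,\chi)$ under GRH. For stage (i), I would start from $\log|L(\sigma_0+it,\chi)|=\int_{\sigma_0}^{\infty}\text{Re}(-L'/L(u+it,\chi))\,du$, substitute Lemma~\ref{upsigma}, and integrate termwise. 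The truncated Dirichlet polynomial integrates cleanly into the main term $\text{Re}\sum_{n\le x}\chi(n)\Lam(n)/(n^{\sigma_0+it}\log n)\cdot\log(x/n)/\log x$; the $(L'/L)'$ term yields $-(L'/L)(\sigma_0+it,\chi)/\log x$ by the fundamental theorem of calculus, which via the Hadamard expansion and the functional-equation identity $\text{Re}(B(\chi))+\sum_\rho\text{Re}(1/\rho)=0$ produces the archimedean contribution $\tfrac12(\log q+\log^+T)/\log x$ together with a negative kernel sum $-(\log x)^{-1}\sum_\rho\frac{\lambda/\log x}{(\lambda/\log x)^2+(\gamma-t)^2}$; the trivial-zero term is absolutely $O(1/\log x)$.

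For stage (ii), the Hadamard product under GRH gives
\[
\log|L(\sigma+it,\chi)| - \log|L(\sigma_0+it,\chi)| = \tfrac12\sum_\rho \log\frac{(\sigma-1/2)^2+(\gamma-t)^2}{(\sigma_0-1/2)^2+(\gamma-t)^2} + (\text{archimedean difference}),
\]
where the zero sum is $\le 0$ since $\sigma\le\sigma_0$, and Stirling's formula for $\Gamma'/\Gamma$ combined with the linear $\log(q/\pi)$ contribution bounds the archimedean difference by $\tfrac{\lambda}{2}(\log q+\log^+T)/\log x+O(1/\log x)$. Combined with stage (i), the two coefficients $\tfrac12$ and $\tfrac{\lambda}{2}$ sum to the advertised $(1+\lambda)/2$.

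The main obstacle is the remaining zero-sum contribution from Lemma~\ref{upsigma}, namely
\[
\frac{1}{\log x}\,\text{Re}\sum_\rho\int_{\sigma_0}^{\infty}\frac{x^{\rho-u-it}}{(\rho-u-it)^2}\,du = \frac{1}{\log x}\,\text{Re}\sum_\rho\int_{-\infty}^{-\lambda/\log x+i(\gamma-t)}\frac{x^{w}}{w^{2}}\,dw
\]
(after the substitution $w=\rho-u-it$ and using $\text{Re}\,\rho=1/2$). A pointwise size estimate on the integrand — of order $x^{-\lambda/\log x}/((\lambda/\log x)^2+(\gamma-t)^2)=e^{-\lambda}/((\lambda/\log x)^2+(\gamma-t)^2)$ — yields, upon summing over $\rho$, a bound of the same shape as the negative kernel sum from stage (i), but multiplied by a factor of order $e^{-\lambda}/\lambda$. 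Thus this unwanted contribution can be absorbed into the negative kernel sum precisely when $e^{-\lambda}/\lambda\le 1$, i.e.\ when $\lambda\ge\lambda_0$ with $e^{-\lambda_0}=\lambda_0$. This is exactly the hypothesis of the proposition and explains the appearance of the constant $\lambda_0\approx 0.56$ in the statement; once this absorption is carried out, the proof is complete.
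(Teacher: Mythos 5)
Your proposal is correct and takes essentially the same route as the paper's proof, merely presenting the two stages in the opposite order: you first derive the bound at $\sigma=\sigma_0$ via Lemma~\ref{upsigma} and Hadamard, then reduce $\sigma\in[1/2,\sigma_0]$ to the boundary, whereas the paper first records the comparison inequality between $\log|L(\sigma+it,\chi)|$ and $\log|L(\sigma_0+it,\chi)|$ (from the non-negativity of $F_{\chi}$) and then treats the $\sigma_0$ case. The decisive step — bounding the zero-sum contribution $\frac{1}{\log x}\sum_{\rho}\int_{\sigma_0}^{\infty}\frac{x^{\rho-s}}{(\rho-s)^2}\,d\sigma$ by $\frac{e^{-\lambda}}{\lambda}\cdot\frac{F_{\chi}(s_0)}{\log x}$ so that it is absorbed by the negative term $-F_{\chi}(s_0)/\log x$ precisely when $e^{-\lambda}\le\lambda$ — is exactly the paper's mechanism for $\lambda_0$, and you identify it correctly.
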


\begin{proof} Let $a = 0$ or $1$ be again the number given by $\chi(-1) = (-1)^a$.
Letting $\rho=1/2 +i\gamma$ run over the non-trivial 
 zeros of $L(s,\chi)$, we define  
 $$
 F_{\chi}(s) = \text{Re }\sum_{\rho} \frac{1}{s-\rho} = \sum_{\rho} \frac{\sigma-1/2}{(\sigma-1/2)^2+ 
 (t-\gamma)^2}. 
 $$Obviously $F_{\chi}(s)$ is non-negative in the half-plane $\sigma \ge 1/2$. By Hadamard's factorization (see \cite[Chapter 12, Eq. (17)]{Dav}), we have \begin{equation}\label{Hadamard} \frac{L'(s,\chi)}{L(s,\chi)}=-\frac{1}{2}\log \frac{q}{\pi}-\frac{1}{2}\frac{\Gamma^{'}}{\Gamma}\left(\frac{s+a}{2}\right)+B(\chi)+\sum_{\rho} \left(\frac{1}{s-\rho} + \frac{1}{\rho}\right).\end{equation} Here, $B(\chi)$ is a constant depending only on $\chi$, whose real part is given by 
 
 $$\text{Re }(B(\chi))=-\sum_{\rho}\frac{1}{\rho}.$$ By taking the real parts of both sides of (\ref{Hadamard}), an application of Stirling's formula yields 
 
 \begin{equation}\label{ineqReal}    
 -\text{Re }\frac{L'(s,\chi)}{L(s,\chi)}=\frac{\log(q)+\log^{+}(t)}{2} -F_{\chi}(s) + O(1) \leq \frac{\log(q)+\log^{+}(T)}{2}+O(1)
 \end{equation} where we used the positivity of $F_{\chi}(s)$ in that region. Integrating (\ref{ineqReal}) as $\sigma=\text{Re }(s)$ varies from $\sigma$ to $\sigma_0(>1/2)$, we obtain, setting $s_0=\sigma_0 +it$,
 
 \begin{equation}\label{deuxabscisses} \log\vert L(s,\chi)\vert -\log\vert L(s_0,\chi)\vert \leq \left(\frac{\log(q)+\log^{+}(T)}{2}+O(1)\right)(\sigma_0 - \sigma).\end{equation} On the other hand, using Lemma \ref{upsigma}, we get 
 
 \begin{align}\label{logderivative} -\frac{L'}{L}(s,\chi) &= &\sum_{n \leq x} \frac{\chi(n)\Lam(n)}{n^{s}}\frac{\log\frac{x}{n}}{\log x} + \frac{1}{\log x}\left( \frac{L'}{L}(s,\chi) \right)' + \frac{1}{\log x} \sum_{\rho \neq 0,1} \frac{x^{\rho - s}}{(\rho - s)^2} \\
&&+\frac{1}{\log x} \sum_{n=0}^{\infty} \frac{x^{-2n-a-s}}{(2n + a +s)^2}  \end{align} for any $s$ not coinciding with a zero of $L(s,\chi)$ and for any $x \geq 2$. Taking $s=\sigma+it$, integrating (\ref{logderivative}) over $\sigma$ from $\sigma_0$ to $\infty$ and extracting the real parts, we have, for $x \geq 2$,

 \begin{align}\label{integrate}
\log |L(s_0,\chi)| = \text{Re }\Big( \sum_{2\le n\le x} \frac{\chi(n)\Lambda(n)}{n^{s_0} \log n} \frac{\log (x/n)}{\log x}  
  &- \frac{1}{\log x} \frac{L^{\prime}}{L}(s_0,\chi) \notag \\
  & \hspace{-8mm} + \frac{1}{\log x} \sum_{\rho} 
 \int_{\sigma_0}^{\infty} \frac{x^{\rho-s}}{(\rho-s)^2} d\sigma +O\Big(\frac{1}{\log x}\Big)\Big).
 \\ \notag
 \end{align} The integral in (\ref{integrate}) is bounded as follows:
 
 $$
\sum_{\rho}\Big|\int_{\sigma_0}^{\infty} \frac{x^{\rho -s}}{(\rho -s)^2} d\sigma\Big| 
\le \sum_{\rho}\int_{\sigma_0}^{\infty}\frac{ x^{\frac 12-\sigma}}{|s_0-\rho|^2} d\sigma 
= \sum_{\rho}\frac{x^{\frac 12-\sigma_0}}{|s_0-\rho|^2 \log x}= \frac{x^{\frac 12-\sigma_0}F_{\chi}(s_0)}{(\sigma_0-\frac 12)\log x}.$$ Thus, using (\ref{ineqReal}), we deduce that for $x\geq 2$

\begin{align}\label{majorations0}\log |L(s_0,\chi)| &\leq \text{Re } \sum_{2\le n\le x} \frac{\chi(n)\Lambda(n)}{n^{s_0} \log n} \frac{\log (x/n)}{\log x}   \\
 &+ \frac{\log(q)+\log^+(t)}{2\log x}
 - \frac{F_{\chi}(s_0)}{\log x}
 + \frac{x^{\frac 12-\sigma_0}}{\log^2 x}\frac{F_{\chi}(s_0)}{(\sigma_0-\frac 12)} \notag
 + O\left(\frac{1}{\log x}\right). \notag \\ \notag
\end{align} Hence, combining (\ref{deuxabscisses}) together with (\ref{majorations0}), the following inequality 

\begin{align}\label{majlog}
 \log |L(\sigma+ it,\chi)|  &\le \frac{\log(q)+\log^{+}(T)}{2}\left(\sigma_0 - \sigma + \frac{1}{\log x}\right)
+ \text{Re }\sum_{2 \le n\le x} \frac{\chi(n)\Lambda(n)}{n^{s_0} \log n}  \frac{\log (x/n)}{\log x} \notag \\
 &\hskip .5 in +F_{\chi}(s_0) \Big( \frac{x^{\frac 12-\sigma_0}}{(\sigma_0-\frac 12) \log^2 x} -\frac{1}{\log x} \Big) + O\Big(\frac{1}{\log x}\Big) 
\end{align} holds for $x\geq 2$ and uniformly for $1/2\leq \sigma \leq \sigma_0 \leq 3/2$, $\vert t\vert \leq T$. We choose $\sigma_0 =\frac12 + \frac{\lam}{\log x}$, where $\lam \ge \lam_0$. This restriction on $\lam$ ensures that the term involving $F_{\chi}(s_0)$ in (\ref{majlog}) makes a negative contribution 
 and may therefore be omitted. The proposition follows easily.  

\end{proof}

\begin{corollary}\label{upperboundline} Let $\chi$ be a primitive character modulo $q$ and assume GRH for $L(s,\chi)$. Then if $q$ is large enough, there exists an absolute constant $c>0$ such that

$$ \left\vert L\left(\frac{1}{2}+it,\chi\right)\right\vert \ll \exp\left(c \frac{\log q + \log^{+} t}{\log \log q}\right).$$
\end{corollary}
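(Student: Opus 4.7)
The strategy is to apply Proposition \ref{large values} directly with carefully chosen parameters and then bound the resulting short Dirichlet polynomial trivially. Set $\sigma=1/2$, fix $\lambda=\lambda_0$ (or any convenient constant $\ge\lambda_0$), choose $x=\log q$, and take $T=\max(|t|,e)$ so that $\log^+ T = \log^+ t + O(1)$. Then Proposition \ref{large values} yields
\begin{equation*}
\log\left|L\!\left(\tfrac{1}{2}+it,\chi\right)\right|
\;\le\;\operatorname{Re}\sum_{n\le \log q}\frac{\chi(n)\Lambda(n)}{n^{\frac12+\frac{\lambda}{\log\log q}+it}\log n}\cdot\frac{\log(\log q/n)}{\log\log q}
\;+\;\frac{1+\lambda}{2}\cdot\frac{\log q+\log^+ t}{\log\log q}+O\!\left(\frac{1}{\log\log q}\right).
\end{equation*}

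The second term is already of the desired shape. For the Dirichlet polynomial, I would bound it in absolute value by $\sum_{n\le\log q}\Lambda(n)/(n^{1/2}\log n)$, which after grouping prime powers is $\le \sum_{p\le\log q} p^{-1/2}+O(\log\log\log q)$. By the prime number theorem (or partial summation with Chebyshev's bound), this is $O\!\bigl(\sqrt{\log q}/\log\log q\bigr)$, hence negligible compared with $\log q/\log\log q$.

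Combining the two estimates, for $q$ sufficiently large we obtain
\begin{equation*}
\log\left|L\!\left(\tfrac{1}{2}+it,\chi\right)\right|\;\le\;c\,\frac{\log q+\log^+ t}{\log\log q}
\end{equation*}
for an absolute constant $c$ (one may take $c=\tfrac{1+\lambda}{2}+\varepsilon$, since all other contributions are of lower order), and exponentiating gives the corollary.

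There is no real obstacle here; the only thing to verify is that $x=\log q$ is the right balance point between the length of the explicit sum (which must be kept short enough that its trivial bound $\sqrt{x}/\log x$ is dominated) and the ``zero-side'' term in Proposition \ref{large values}, which contributes $(\log q+\log^+ t)/\log x$. Any choice of $x$ of the form $(\log q)^{A}$ with $A\ge 1$ would work, but $x=\log q$ is the cleanest and produces the claimed bound immediately.
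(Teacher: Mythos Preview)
Your proof is correct and follows essentially the same route as the paper: apply Proposition~\ref{large values} with $x$ a small power of $\log q$ and bound the short Dirichlet polynomial trivially. The paper chooses $x=\log^{2-\epsilon}q$ rather than your $x=\log q$, but either choice works for the same reason. One small caveat: your closing remark that ``any $x=(\log q)^A$ with $A\ge 1$ would work'' is not quite right---the trivial bound on the prime sum is of order $(\log q)^{A/2}/\log\log q$, so one needs $A<2$ (or $A=2-\epsilon$, as the paper takes) for it to be dominated by the main term; for $A>2$ the trivial estimate is too crude.
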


\begin{proof} This follows directly from the above proposition by setting $x=\log^{2-\epsilon} q$.

\end{proof}

\begin{remark} This inequality is less precise than \cite[Corollary 1.2]{ChandeeCritical} when $qt$ is large. Nevertheless, this covers the case when $t$ is relatively small compared to $q$ which is suitable for our applications.\end{remark}

Our proof of Theorem \ref{shiftedmoments} rests upon our main Proposition \ref{large values}. We begin by showing that the sum over prime powers appearing in that proposition may be in fact restricted over primes.

\begin{lemma}\label{primepowers}
Assume that the Dirichlet $L$-functions modulo $q$ satisfy the Generalized Riemann Hypothesis. Let $t \le \log^A q$ with $A>0$, $x\ge 2$ and $\sigma\ge \frac 12$.  Then, if $\chi$ is a Dirichlet character modulo $q$ such that $\chi \neq \chi_0^{2}$, we have
$$
\left| \sum_{n\le x \atop n \neq p} \frac{\chi(n)\Lambda(n)}{n^{\sigma+it}\log n}  
\frac{\log x/n}{\log x} \right| \ll \log \log\log q +O(1).
$$

\end{lemma}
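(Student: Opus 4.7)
\medskip

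\noindent \textbf{Proof plan.} The plan is to isolate the dominant $k=2$ prime-power contribution and bound it via a split into small and large primes, using Mertens on the former and GRH together with Abel summation on the latter.

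Since $\Lambda$ is supported on prime powers, the condition $n \neq p$ restricts the sum to $n = p^k$ with $k \geq 2$, where $\Lambda(p^k)/\log(p^k) = 1/k$. For $k \geq 3$ and $\sigma \geq 1/2$, absolute convergence of $\sum_{k \geq 3} k^{-1}\sum_p p^{-k/2}$ bounds this range by $O(1)$. It therefore suffices to bound the $k=2$ sum
$$S := \frac{1}{2}\sum_{p \leq \sqrt{x}} \chi^2(p) g(p), \qquad g(u) := u^{-2\sigma - 2it}\frac{\log(x/u^2)}{\log x},$$
by $O(\log\log\log q)$; here the assumption (read as $\chi^2 \neq \chi_0$, matching Theorem~\ref{largevalues}) guarantees that $\chi^2$ is nonprincipal modulo $q$.

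I would set $y_0 := (\log q)^{C}$ with $C = 2A+5$ and split $S = S^{<} + S^{>}$ at $p = y_0$. On the small range, $|g(p)| \leq p^{-2\sigma} \leq 1/p$ together with Mertens' theorem yield
$$|S^{<}| \leq \sum_{p \leq y_0} \frac{1}{p} \leq \log\log y_0 + O(1) \leq \log\log\log q + O(1).$$
On the large range, I would apply Abel summation with $A(u) := \sum_{p \leq u}\chi^2(p)$; the upper boundary term at $u = \sqrt{x}$ vanishes since $g(\sqrt{x}) = 0$. Under GRH for the primitive character inducing $\chi^2$, the standard prime number theorem yields $|A(u)| \ll \sqrt{u}(\log qu)^2$, while a direct computation gives $|g'(u)| \ll (|s|+1)/u^2$ on $[y_0, \sqrt{x}]$, with $|s| := |2\sigma + 2it| \ll \log^A q$. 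Combined with $\int_{y_0}^{\infty}(\log qu)^2 u^{-3/2}\,du \ll (\log q)^2/\sqrt{y_0}$, these estimates force
$$|S^{>}| \ll \frac{(|s|+1)(\log q)^2}{\sqrt{y_0}} = o(1)$$
once $C > 2A+4$, and so $|S| \ll \log\log\log q + O(1)$.

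The main obstacle is keeping track of the shift parameter $|t| \leq \log^A q$ as it propagates through the derivative bound $|g'(u)| \ll |s|/u^{2}$; this dictates a cutoff $y_0 = (\log q)^{\Theta(A)}$ growing polynomially in $A$, which must still be small enough that Mertens on $[1, y_0]$ delivers $\log\log\log q$ rather than $\log\log q$. A minor technical point is transferring the GRH-based prime number theorem from the primitive inducing character of $\chi^2$ to $\chi^2$ itself; this only alters $A(u)$ by a negligible contribution coming from primes dividing $q$ and is absorbed into the GRH bound.
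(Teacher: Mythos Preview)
Your proof is correct and follows essentially the same approach as the paper: both handle $k\ge 3$ trivially, split the $k=2$ sum at a threshold of the form $(\log q)^{cA}$, use Mertens on the small primes, and GRH combined with partial summation on the large primes. The only cosmetic difference is that the paper applies partial summation twice (first passing from $\sum_{n\le x}\chi(n)\Lambda(n)$ to $\sum_{p\le x}\chi(p)(\log p)p^{-2it}$, then again), whereas you do it in a single step against $g(u)$; your correct reading of the hypothesis as $\chi^2\neq\chi_0$ also matches the paper's actual use of the lemma.
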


\begin{proof}
Clearly, the contribution coming from the prime powers $p^k$ with $k\geq 3$ is $\ll 1$. It remains to handle the terms $n=p^2$. Hence, we have to bound

\begin{equation}\label{partial}\sum_{p\leq \sqrt{x}} \frac{\chi^2(p)}{p^{2\sigma + 2it}}\frac{\log(\sqrt{x}/p)}{\log \sqrt{x}}.\end{equation} We split this sum into ranges $2\leq p \leq \log^{8+4A+\epsilon} q$ and $\log^{8+4A+\epsilon} q \le p \le \sqrt{x}$. Then the first sum is easily bounded by $\displaystyle{\sum_{p\leq \leq \log^{8+4A+\epsilon} q} 1/p \ll \log\log\log q}$.

\vspace{2mm}
To treat the second sum, let us recall (see for instance \cite[p. 125]{Dav}) that under GRH, the estimate 
 
 $$ \sum_{n\leq x} \chi(n)\Lambda(n) \ll x^{1/2}\log^2 (qx)$$ holds for $x\ge 2$ and $\chi$ a non trivial character.  By partial summation, we can deduce that 
 $$ \sum_{p\le x}\frac{\chi(p)\log p}{p^{2it}} \ll \vert t\vert x^{1/2}\log^2 (qx).$$ Thus, again by partial summation, we derive (using our restriction on $t$ and the fact that $\chi^2$ is non trivial) that the sum over primes $\geq \log^{8+4A+\epsilon} q $ is $O(1)$, which concludes the proof.

\end{proof}

Proposition \ref{large values} together with Lemma \ref{primepowers} give directly
\begin{corollary}\label{boundprimes}
For a Dirichlet character $\chi$ modulo $q$ such that $\chi^2 \neq \chi_0$, the inequality

$$\log |L(\sigma +it,\chi)| \le \text{\rm Re } \sum_{p\le x} \frac{\chi(p)}{p^{\sigma+ \frac{\lam}{\log x} +it}} 
\frac{\log (x/p)}{\log x} + \frac{(1+\lam)}{2} \frac{\log q + \log^{+} T}{\log x} + O\Big(\log \log \log q\Big)$$ holds uniformly for $\vert t\vert \leq T<\log^A q$ and $1/2\leq \sigma \leq 1/2+ \frac{\lam}{\log x}$.
\end{corollary}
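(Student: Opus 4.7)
The plan is to derive the corollary as a direct consequence of Proposition \ref{large values} combined with Lemma \ref{primepowers}. The proposition already produces essentially the right shape, namely
\begin{equation*}
\log|L(\sigma+it,\chi)| \le \text{Re}\sum_{n\le x}\frac{\chi(n)\Lam(n)}{n^{1/2+\lam/\log x+it}\log n}\frac{\log(x/n)}{\log x} + \frac{(1+\lam)}{2}\frac{\log q + \log^{+}(T)}{\log x} + O\!\left(\frac{1}{\log x}\right),
\end{equation*}
so the only remaining task is to reduce the sum over all prime powers to a sum over primes alone, up to an acceptable error.

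I would split the inner sum according to whether $n$ is a prime, a prime square, or a prime power $p^k$ with $k\ge 3$. The prime terms produce exactly the main sum in the statement of the corollary, after using $\Lam(p)/\log p = 1$. For $k\ge 3$, each term has absolute value at most $1/(k\, p^{k/2})$, and so this entire piece is bounded trivially by
\begin{equation*}
\sum_{k\ge 3}\sum_{p}\frac{1}{k\, p^{k/2}}=O(1),
\end{equation*}
which is swallowed by the target error term. The prime-square contribution is the delicate one: Lemma \ref{primepowers} controls it by $O(\log\log\log q)$ under GRH, and this is precisely where the hypothesis $\chi^2\ne\chi_0$ is used, since the proof of that lemma applies GRH for the non-principal $L$-function $L(s,\chi^2)$ followed by partial summation.

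Putting these pieces together, the sum over $n\le x$ collapses to the sum over primes $p\le x$ displayed in the corollary, while all error terms---the $O(1/\log x)$ from Proposition \ref{large values}, the $O(1)$ from the deep prime powers, and the $O(\log\log\log q)$ from the prime squares---are dominated by the stated $O(\log\log\log q)$. There is no genuine obstacle at this stage: the heavy analytic lifting has already been carried out upstream, and the corollary is merely a clean substitution of Lemma \ref{primepowers} into Proposition \ref{large values}.
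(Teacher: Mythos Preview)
Your proposal is correct and matches the paper's own argument: the paper simply states that the corollary follows directly from Proposition~\ref{large values} combined with Lemma~\ref{primepowers}, which is precisely the substitution you carry out. Your separate treatment of the $p^k$ with $k\ge 3$ is slightly redundant, since Lemma~\ref{primepowers} already covers all non-prime prime powers (its proof begins with exactly that observation), but this does not affect correctness.
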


The next lemma is a $q$-analogue of \cite[Lemma $3$]{SoundRiemann}.

\begin{lemma}\label{momentsprimes}
Suppose $x\geq 2$ and $k$ is an integer such that $x^k<q$. Then for any $t\in\mathbb{R}$ and any complex numbers $a(p)$ we have

$$  \sum_{\chi \in X_q}\left|\sum_{p\leq x}\frac{\chi(p)a(p)}{p^{1/2+it}}\right|^{2k}\leq \phi(q)k!\left(\sum_{p\leq x}\frac{|a(p)|^2}{p}\right)^{k}.$$ Hence, there exist positive constants $c_{\chi}$ such that $\sum_{\chi \bmod q}c_{\chi}=\phi(q)$ and the following inequality holds:

$$ \left|\sum_{p\leq x}\frac{\chi(p)a(p)}{p^{1/2+it}}\right|^{2k}\leq c_{\chi}k!\left(\sum_{p\leq x}\frac{|a(p)|^2}{p}\right)^{k}.$$
\end{lemma}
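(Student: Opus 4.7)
Write $S_\chi := \sum_{p\le x}\chi(p)a(p)/p^{1/2+it}$. The plan is to expand the $2k$-th power
\[
|S_\chi|^{2k} = \sum_{p_1,\ldots,p_k}\sum_{q_1,\ldots,q_k}\chi(p_1\cdots p_k)\,\overline{\chi(q_1\cdots q_k)}\cdot\frac{\prod_{i=1}^{k}a(p_i)\overline{a(q_i)}}{(p_1\cdots p_k)^{1/2+it}(q_1\cdots q_k)^{1/2-it}},
\]
with all primes $\le x$, and to average over $X_q$ using the orthogonality relation $\sum_{\chi\in X_q}\chi(A)\overline{\chi(B)}=\phi(q)$ when $A\equiv B\pmod q$ and $\gcd(AB,q)=1$, and $0$ otherwise. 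The hypothesis $x^k<q$ plays a single, crucial role: it converts the congruence $p_1\cdots p_k\equiv q_1\cdots q_k\pmod q$ into the genuine equality $p_1\cdots p_k = q_1\cdots q_k$ of positive integers. By unique factorization this forces the two $k$-tuples of primes to be permutations of a common multiset, on which the phases $p^{\pm it}$ cancel and the coefficient reduces to $\prod_p|a(p)|^{2m_p}$, where $m_p$ is the common multiplicity of $p$.

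Dropping the coprimality restriction (harmless since all surviving contributions are non-negative) and grouping tuples by their multiset $(m_p)$ with $\sum_p m_p = k$, the number of ordered realizations is $k!/\prod_p m_p!$ on each side, so
\[
\sum_{\chi\in X_q}|S_\chi|^{2k} \;\le\; \phi(q)\sum_{\substack{m_p\ge 0\\ \sum_p m_p = k}}\Bigl(\frac{k!}{\prod_p m_p!}\Bigr)^{\!2}\prod_p\frac{|a(p)|^{2m_p}}{p^{m_p}}.
\]
I would then conclude via the trivial inequality $(k!/\prod_p m_p!)^{2}\le k!\cdot(k!/\prod_p m_p!)$, which is just $\prod_p m_p!\ge 1$, combined with the multinomial identity
\[
k!\,\Sigma^{k} \;=\; \sum_{\sum_p m_p = k}\frac{(k!)^{2}}{\prod_p m_p!}\prod_p\frac{|a(p)|^{2m_p}}{p^{m_p}}, \qquad \Sigma := \sum_{p\le x}\frac{|a(p)|^{2}}{p}.
\]

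For the second claim, set $c_\chi := |S_\chi|^{2k}/(k!\,\Sigma^{k})$; this automatically gives equality in the pointwise bound, and the first part yields $\sum_\chi c_\chi \le \phi(q)$. Distributing the non-negative slack $\phi(q)-\sum_\chi c_\chi$ uniformly over characters produces strictly positive weights summing to exactly $\phi(q)$ that still majorize $|S_\chi|^{2k}/(k!\Sigma^{k})$; the degenerate case $\Sigma=0$ is trivial. The main obstacle is purely combinatorial bookkeeping, namely the multiset parametrization of the diagonal and the squared-versus-linear multinomial coefficient comparison; everything else is standard character orthogonality, with the size constraint $x^k<q$ used exactly once to upgrade congruence to equality.
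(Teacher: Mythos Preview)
Your proof is correct and is essentially the same argument the paper has in mind: the paper's own proof is a one-sentence pointer to Lemma~3 of \cite{SoundRiemann}, replacing $t$-orthogonality by character orthogonality and noting that $x^{k}<q$ upgrades $p_1\cdots p_k\equiv q_1\cdots q_k\pmod q$ to genuine equality. You have simply written out the combinatorics (multiset parametrization, the inequality $\prod_p m_p!\ge 1$, and the multinomial identity) that Soundararajan's lemma encodes, together with an explicit construction of the weights $c_\chi$ for the second assertion.
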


\begin{proof}The proof follows the same lines as in the proof of Lemma $3$ of \cite{SoundRiemann}. After expanding the $2k$-th power, we use the orthogonality of characters modulo $q$ (here the inequality $x^k<q$ ensures that $m=n \bmod\, q$ implies $m=n$) instead of the orthogonality in $t$-aspect.

\end{proof}

We will need the following adaptation of \cite[Lemma $3.5$]{ChandeeShifts}.

\begin{lemma}\label{cos}

$$\sum_{p\leq z} \frac{\cos(a\log p)}{p} \leq \begin{cases} \log\left(\min\left\{\frac{1}{\vert a\vert},\log z\right\}\right)+ O(1) &\mbox{if   } \vert a\vert \leq \frac{1}{100}, \\ 
\log\log(2+\vert a\vert)+O(1)& \mbox{if   } \vert a\vert\geq \frac{1}{100}. \\\end{cases}$$

\end{lemma}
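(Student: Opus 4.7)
My plan is to translate the cosine sum into a Dirichlet series, compare it with $\log\zeta$ at an abscissa shifted slightly to the right of $\mathrm{Re}(s)=1$, and then invoke the standard estimates for $|\zeta(s)|$ on that line.

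First I would write $\cos(a\log p)=\mathrm{Re}\,(p^{-ia})$, so that
$$\sum_{p\le z}\frac{\cos(a\log p)}{p}=\mathrm{Re}\sum_{p\le z}\frac{1}{p^{1+ia}}.$$
Setting $\alpha:=1/\log z$, I would approximate the partial sum by the full Euler sum at abscissa $1+\alpha$ in three steps: (i) the replacement of $1/p^{1+ia}$ by $1/p^{1+\alpha+ia}$ for $p\le z$ costs at most $\sum_{p\le z}(1-p^{-\alpha})/p\ll\alpha\sum_{p\le z}(\log p)/p=O(1)$ by Mertens' theorem together with $1-e^{-x}\le x$; (ii) the tail $\sum_{p>z}1/p^{1+\alpha+ia}$ is bounded in absolute value by $\sum_{p>z}1/p^{1+\alpha}=O(1)$, via partial summation from Chebyshev's estimate on $\sum_{p\le x}\log p$; (iii) the contribution of prime powers with $k\ge 2$ in the identity $\log\zeta(s)=\sum_{p}\sum_{k\ge 1}1/(k p^{ks})$ is absolutely bounded on $\mathrm{Re}(s)\ge 1$. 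Combining these yields
$$\sum_{p\le z}\frac{1}{p^{1+ia}}=\log\zeta(1+\alpha+ia)+O(1).$$

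Taking real parts, the lemma reduces to bounding $\log|\zeta(1+\alpha+ia)|$ in each of the two regimes of $|a|$. When $|a|\le 1/100$, I would use the local expansion $\zeta(s)=1/(s-1)+O(1)$ near the pole, which yields $|\zeta(1+\alpha+ia)|\ll 1/\sqrt{\alpha^2+a^2}+O(1)$ and hence
$$\log|\zeta(1+\alpha+ia)|\le \log\min(1/\alpha,1/|a|)+O(1)=\log\min(\log z,1/|a|)+O(1).$$
When $|a|\ge 1/100$, I would instead invoke the classical convexity bound $|\zeta(1+\alpha+it)|\ll\log(2+|t|)$, giving $\log|\zeta(1+\alpha+ia)|\le\log\log(2+|a|)+O(1)$. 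Both estimates are uniform in $z$, and together they reproduce the two cases of the lemma.

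The main obstacle is the first step: the passage from the truncated prime sum at abscissa $1$ to the full series defining $\log\zeta$ at abscissa $1+\alpha$ must be executed with an error that is genuinely $O(1)$, uniformly in both $z$ and $a$. The key observation making this work is that the shift, tail, and prime-power estimates only involve the real part of the exponent, so the oscillation coming from $a$ never enters the size of the error. Once this reduction to $\log|\zeta(1+\alpha+ia)|$ is in place, the case analysis is a direct application of textbook estimates for $\zeta$ in a neighborhood of $\mathrm{Re}(s)=1$.
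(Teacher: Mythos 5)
Your proof is correct and follows essentially the same route as the paper, whose terse proof invokes Mertens' theorem when $|a|\le 1/\log z$ and otherwise cites inequality (2.1.6) of Granville--Soundararajan, which is precisely the comparison of the truncated prime sum to $\log|\zeta(1+1/\log z+ia)|$ that you carry out in detail. One minor terminological slip: $|\zeta(1+\alpha+it)|\ll\log(2+|t|)$ is not the convexity bound but the standard elementary estimate for $\zeta$ on the line $\sigma=1$, though the inequality you invoke is correct.
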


\begin{proof}  
If $\vert a\vert \leq \frac{1}{\log z}$, it follows from Mertens' Theorem. Otherwise, we use inequality $(2.1.6)$, p.$57$ of \cite{GS}.
\end{proof}

\subsection{Proof of Theorem \ref{largevalues}}

First, remark that if $-\infty\leq V\leq 4\sqrt{\log \log q}$, then trivially we have 

$$\int_{-\infty}^{+\infty} e^{V} N_t(q,V)dV \leq \phi(q) e^{4\sqrt{\log \log q}} = o(\phi(q)\log q).$$ In view of Corollary \ref{upperboundline}, we can assume $4\sqrt{\log \log q} \leq V \leq 4ck \frac{\log q}{\log \log q}$ using the fact that $t_0=\max(t_i, i=1\cdots, 2k) \leq \log^A q.$ It remains to estimate $N_t(q,V)$ for large $q$ with an explicit dependence on $t_0$. Choosing $\lambda=0.6$ in Corollary \ref{boundprimes}, we obtain if $\chi^2 \neq \chi_0$ that 

\begin{flalign}\label{somme}
&\log\left\vert L\left(\frac{1}{2}+it_1,\chi\right)\right\vert+\cdots+\log\left\vert L\left(\frac{1}{2}+it_{2k},\chi\right)\right\vert & \notag  \\ \notag
&\leq  \text{\rm Re } \left(\sum_{p\le x} \frac{\chi(p)p^{-it_1}}{p^{\frac{1}{2}+ \frac{0.6}{\log x}}} 
\frac{\log (x/p)}{\log x}+\cdots+ \frac{\chi(p)p^{-it_{2k}}}{p^{\frac{1}{2}+ \frac{0.6}{\log x}}} 
\frac{\log (x/p)}{\log x}\right)+ \frac{8k}{5} \frac{\log q + \log^+ T}{\log x}& \\  \notag
&+ O\Big(\log \log \log q\Big). &\\ \notag
\end{flalign} Following \cite{ChandeeShifts} and \cite{SoundRiemann}, we define the quantity $A$ as 

$$ A = \begin{cases}\frac{\log W}{2} &\mbox{if   } 4\sqrt{\log \log q}\leq V\leq W, \\ 
\frac{W\log W}{2V}& \mbox{if   } W\leq V\leq \frac{1}{4k}W\log W, \\
2k & \mbox{if   } V>\frac{1}{4k}W\log W.\end{cases} 
$$Let $x=(q\max(T,1))^{A/V}$ and $z=x^{1/\log \log q}$. From the previous bounds, we have

\begin{flalign*}&\log\left\vert L\left(\frac{1}{2}+it_1,\chi\right)\right\vert+\cdots+\log\left\vert L\left(\frac{1}{2}+it_{2k},\chi\right)\right\vert & \\
&\leq S_1(\chi)+S_2(\chi)+\frac{8k}{5} \frac{\log q + \log^{+} T}{\log x}+O\Big(\log \log \log q\Big),&\\
\end{flalign*}
where

$$  S_1(\chi)=\left\vert\sum_{p\le z} \frac{\chi(p)(p^{-it_1}+\cdots+p^{-it_{2k}})}{p^{\frac{1}{2}+ \frac{0.6}{\log x}}}\frac{\log (x/p)}{\log x}\right\vert$$ and

$$ S_2(\chi)=\left\vert\sum_{z< p\le x} \frac{\chi(p)(p^{-it_1}+\cdots+p^{-it_{2k}})}{p^{\frac{1}{2}+ \frac{0.6}{\log x}}}\frac{\log (x/p)}{\log x}\right\vert .$$ It remains to study how often with respect to characters these quantities could be large.
Firstly, if $\chi \in S_t(q,V)$, we must have \footnote{Compare $\frac{V}{A}$ to $\log\log\log q$.} 

$$  S_1(\chi) \geq V_1:=V\left(1-\frac{9k}{5A}\right)\text{    or   } S_2(\chi)\geq V_2:=\frac{kV}{5A}.$$ Let
$N_i(q)=\# S_i(q):= \left\{ \chi (\bmod \,q), \chi^2 \neq \chi_0 : S_i(\chi)\geq V_i\right\}$ for $i=1,2$. We want to find upper bounds for $N_i(q)$ with a certain uniformity in $t$ \footnote{We have to keep in mind that for our applications $t$ will be at most of size $\log q$.}. By Lemma \ref{momentsprimes}, we see that, for any natural number $l\leq \frac{3}{4}\frac{V}{A}$, \footnote{Here we use that $T\leq \log^A q$.} we have 

$$ \left\vert S_2(\chi)\right\vert^{2l} \leq c_{\chi}l!\left(\sum_{z< p\le x} \frac{4k^2}{p} \right)^{l} \ll c_{\chi}(4lk^2(\log\log\log q+ O(1))^{l}.$$ Choosing $l=\lfloor 3V/4A \rfloor$ and observing that 

\begin{equation}\label{N2} \sum_{\chi \in S_2(q)}\left\vert S_2(\chi)\right\vert^{2l}  \geq N_2(q)V_2^{2l}\end{equation} we derive 

\begin{equation*} N_2(q) \ll \sum_{\chi \in S_2(q)} c_{\chi} \left(\frac{5A}{kV}\right)^{2l}(4lk^2(\log\log\log q+ O(1))^{l} \ll \phi(q) \exp\left(-\frac{V}{2A}\log V\right). \end{equation*} It remains to find an upper bound for $N_1(q)$. By Lemma \ref{momentsprimes}, for any $l < \frac{\log q}{\log z}$,

\begin{align*} \left\vert S_1(\chi)\right\vert^{2l}& \leq  c_{\chi}l!\left(\sum_{ p\le z} \frac{\vert p^{-it_1}+\cdots+p^{-it_{2k}}\vert^2}{p} \right)^{l} \\
&\ll c_{\chi}l!\left(\sum_{ p\le z} \frac{2k+2\sum_{i<j}\cos((t_i-t_j)\log p)}{p} \right)^{l} \\
& \ll c_{\chi}l!\left(2k\log\log z+2\sum_{i,j\atop i<j}F_{i,j}\right)^l \\
&\ll c_{\chi}l! \,W^l \ll c_{\chi}\sqrt{l}\left(\frac{lW}{e}\right)^l \\
\end{align*} where we applied Lemma \ref{cos} and used together Stirling's formula and the fact that $z<q$. 

\begin{remark} If $\vert t_i-t_j\vert \geq \frac{1}{100}$, by hypothesis this quantity is at most $2\log^A q$. Hence, the second case of Lemma \ref{cos} implies that the sum over primes is $\ll \log\log\log q$. \end{remark}Proceeding as in (\ref{N2}) for $N_2$, we deduce that 

$$ N_1(q) \ll V_1^{-2l} \sum_{\chi \in S_1(q)}\left\vert S_1(\chi)\right\vert^{2l} \ll \phi(q)\sqrt{l}\left(l\frac{W}{eV_1^2}\right)^l. $$
When $V\leq \frac{W^2}{4k^3}$, we choose $l=\lfloor\frac{V_1^2}{W}\rfloor$, and when $V>\frac{W^2}{4k^3}$, we choose $l=\lfloor 8V\rfloor$. We easily verify, using the definition of $A$, that the condition $l<\frac{\log q}{\log z}$\footnote{Use the fact that $W/4k^2 \leq \log\log q.$} holds in both cases.  Finally we get

$$N_1(q) \ll \phi(q)\frac{V}{\sqrt{W}}\exp\left(-\frac{V_1^2}{W}\right)+\phi(q)\exp(-3V\log V).$$
 Using our bounds on $N_1(q)$ and $N_2(q)$, elementary computations lead to the proof of Theorem \ref{largevalues}.

\section{Application to upper bounds for moments of theta functions}

 In that section, we will prove Theorem \ref{upperGRH} in the case of even characters. The proof for odd characters goes exactly along the same lines. The method is the following, we express theta values as Mellin transform of $L$- functions and then we use our previous result about moments of shifted $L$- functions.

\vspace{1mm}
For every even primitive character $\chi$ modulo $q$, recall the following relation for $c>1/2$

$$\theta(1,\chi)=\int_{c-i\infty}^{c+\infty} L(2s,\chi)\left(\frac{q}{2\pi}\right)^{s}\Gamma(2s)ds.$$

Shifting the line of integration to $\Re(s)=1/4$ and using the decay of $\Gamma(s)$ in vertical strips, we end up with 

$$\theta(1,\chi)=\left(\frac{q}{\pi}\right)^{\frac{1}{4}}\int_{-\infty}^{\infty}L\left(\frac{1}{2}+2it,\chi\right)\left(\frac{q}{\pi}\right)^{2it}\Gamma\left(\frac{1}{2}+2it\right)dt.$$

We express the moments as

\begin{equation}\label{moments2k}\sum_{\chi \in X_q^+ \backslash\chi_0}\vert \theta(1,\chi)\vert^{2k}=\left(\frac{q}{\pi}\right)^{\frac{k}{2}} \sum_{\chi \in X_q^+\backslash\chi_0}\left\vert \int_{-\infty}^{\infty}L\left(\frac{1}{2}+2it,\chi\right)\left(\frac{q}{\pi}\right)^{2it}\Gamma\left(\frac{1}{2}+2it\right)dt\right\vert^{2k}.\end{equation} Hence, the problem boils down to getting a bound of size $\log^{(k-1)^2+\epsilon} q$ for the $2k$-fold integral. In the following, we can sum over $X_q^*$ without substantial loss.

\subsection{Cutting part}

 The strategy is the following: we will cut up to a certain reasonable height, for instance $\log^{\epsilon} q$. Precisely, using the decay of $\Gamma\left(\frac{1}{2}+2it\right)$, we bound the tail:

\begin{lemma}\label{resteintegral} Fix $\epsilon>0$. There exists an absolute constant $c$ such that
$$\sum_{\chi \in X_q^*}\left\vert \int_{-\infty}^{\infty}L\left(\frac{1}{2}+2it,\chi\right)\left(\frac{q}{\pi}\right)^{2it}\Gamma\left(\frac{1}{2}+2it\right)\mathds{1}_{\vert t\vert \geq\log^{\epsilon}(q)}(t)dt\right\vert^{2k} \ll \phi(q) e^{-c\log^{\epsilon}q}.$$
\end{lemma}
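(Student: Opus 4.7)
The plan is to use H\"older's inequality to extract almost all of the exponential decay of the Gamma factor, and then to bound the resulting averaged $2k$-th moment of $L$ via Theorem~\ref{shiftedmoments}. Write
$$I(\chi)=\int_{|t|\ge\log^\epsilon q}L\!\left(\tfrac12+2it,\chi\right)\left(\frac{q}{\pi}\right)^{\!2it}\Gamma\!\left(\tfrac12+2it\right)dt.$$
Applying H\"older with conjugate exponents $2k/(2k-1)$ and $2k$ to the product $|\Gamma(\tfrac12+2it)|^{(2k-1)/(2k)}$ and $|\Gamma(\tfrac12+2it)|^{1/(2k)}L(\tfrac12+2it,\chi)(q/\pi)^{2it}$, one obtains
$$|I(\chi)|^{2k}\le\left(\int_{|t|\ge\log^\epsilon q}|\Gamma(\tfrac12+2it)|\,dt\right)^{\!2k-1}\!\!\int_{|t|\ge\log^\epsilon q}|\Gamma(\tfrac12+2it)|\,|L(\tfrac12+2it,\chi)|^{2k}\,dt.$$
Stirling's formula gives $|\Gamma(\tfrac12+2it)|\ll e^{-\pi|t|}$, so the prefactor is $\ll e^{-(2k-1)\pi\log^\epsilon q}$.

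Summing over $\chi\in X_q^*$ and swapping summation with the remaining integral reduces the problem to bounding $\sum_{\chi\in X_q^*}|L(\tfrac12+2it,\chi)|^{2k}$ uniformly in $t$. For $|t|\le\log q$, I would apply Theorem~\ref{shiftedmoments} with all $2k$ shifts equal to $2t$: every pair then satisfies $|t_i-t_j|=0<1/100$, so each $E_{i,j}$ equals $(\log q)^{1/2}$, the product $\prod_{i<j}E_{i,j}$ equals $(\log q)^{k(2k-1)/2}$, and combined with the extra $(\log q)^{k/2+\epsilon}$ this gives
$$\sum_{\chi\in X_q^*}|L(\tfrac12+2it,\chi)|^{2k}\ll\phi(q)(\log q)^{k^2+\epsilon}.$$
For $|t|>\log q$, Corollary~\ref{upperboundline} supplies the (much weaker) pointwise bound $|L(\tfrac12+2it,\chi)|^{2k}\ll\exp(O((\log q+\log|t|)/\log\log q))$, whose contribution is swamped by the $e^{-\pi|t|}$ from $\Gamma$ and is at worst of size $\phi(q)q^{-\pi/2+o(1)}$, negligible for our purposes.

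Collecting the two pieces yields
$$\sum_{\chi\in X_q^*}|I(\chi)|^{2k}\ll e^{-(2k-1)\pi\log^\epsilon q}\cdot\phi(q)(\log q)^{k^2+\epsilon}\int_{|t|\ge\log^\epsilon q}e^{-\pi|t|}\,dt\ll\phi(q)e^{-c\log^\epsilon q}$$
for any fixed $c<2k\pi$, which is the claim. The only (mild) technical point is checking that the shifted-moments theorem applies in the coincident-shift regime $t_1=\cdots=t_{2k}=2t$; this lies squarely within its hypotheses whenever $|t|\le\log q$, and outside that range the exponential decay of $\Gamma$ does the work unaided, so no serious obstacle is encountered.
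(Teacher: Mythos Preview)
Your proof is correct and follows essentially the same approach as the paper: apply H\"older to split off a $(2k-1)$-fold Gamma tail from a single integral of $|\Gamma|\sum_{\chi}|L|^{2k}$, then control the latter by an averaged $2k$-th moment bound. The only minor variation is that the paper invokes Proposition~2.9 of \cite{LiChandee} (a hybrid estimate valid for all $t$) after a dyadic decomposition in $t$, whereas you use Theorem~\ref{shiftedmoments} on the range $|t|\le\log q$ and Corollary~\ref{upperboundline} beyond it; both routes yield the stated bound.
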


\begin{proof}

Using H\"older inequality with parameters $\frac{1}{2k}+\frac{2k-1}{2k}=1$, the problem reduces to bound

$$\sum_{\chi \in X_q^*}\left(\int_{\vert t\vert \geq\log^{\epsilon} q}\left\vert L\left(\frac{1}{2}+2it,\chi\right)\right\vert^{2k}\left\vert\Gamma\left(\frac{1}{2}+2it\right)\right\vert dt\right)\left(\int_{\vert t\vert \geq\log^{\epsilon} q}\left\vert \Gamma\left(\frac{1}{2}+2it\right)\right\vert dt\right)^{2k-1}.$$ We decompose dyadically the range of integration in the left hand side and use the convergence of the right hand side to end up with

$$\sum_{n\geq \log^{\epsilon} q} \sum_{\chi \in X_q^*}\int_{n}^{2n}\left\vert L\left(\frac{1}{2}+2it,\chi\right)\right\vert^{2k}\left\vert\Gamma\left(\frac{1}{2}+2it\right)\right\vert dt.$$Using Stirling's formula and Proposition $2.9$ of \cite{LiChandee}\footnote{The method is the same as our proof of Theorem \ref{shiftedmoments}.}, we get for $c_1>0$ an absolute constant

$$\begin{array}{l} \displaystyle{\sum_{n\geq \log^{\epsilon} q} \sum_{\chi \in X_q^*}\int_{n}^{2n}\left\vert L\left(\frac{1}{2}+2it,\chi\right)\right\vert^{2k}\left\vert\Gamma\left(\frac{1}{2}+2it\right)\right\vert dt}  \\
\displaystyle{\ll  \sum_{n\geq \log^{\epsilon} q} e^{-c_1n} \sum_{\chi \in X_q^*}\int_{n}^{2n}\left\vert L\left(\frac{1}{2}+2it,\chi\right)\right\vert^{2k} dt} \\
\displaystyle{\ll \phi(q)(\log q)^{k^2+\epsilon} \sum_{n\geq \log^{\epsilon} q} e^{-c_1n} n(\log n)^{k^2+\epsilon} \ll \phi(q) e^{-c\log^{\epsilon}q}.}
\end{array}$$

\end{proof}

\subsection{Bound for the hypercube integral}

 It remains to bound optimally the integral on the $2k$-hypercube $\mathcal{H}$ of size $\log^{\epsilon} q$. First, observing that $\Gamma\left(\frac{1}{2}+2it\right)$ is bounded on $\mathcal{H}$ and expanding the integral in (\ref{moments2k}), we get 
 
\begin{eqnarray}\label{hypercube} \sum_{\chi \in X_q^*}\left\vert \int_{-\infty}^{\infty}L\left(\frac{1}{2}+2it,\chi\right)\left(\frac{q}{\pi}\right)^{2it}\Gamma\left(\frac{1}{2}+2it\right)\mathds{1}_{\vert t\vert \leq\log^{\epsilon}(q)}(t)dt\right\vert^{2k}  \\  
  \ll \sum_{\chi \in X_q^*}\int_{-\infty}^{\infty}\cdots\int_{-\infty}^{\infty}\left\vert L\left(\frac{1}{2}+2it_1,\chi\right) \cdots  L\left(\frac{1}{2}-2it_{2k},\chi\right)\right\vert \mathds{1}_{||t|| \leq\log^{\epsilon}(q)}(t)dt_1\cdots dt_{2k} \nonumber \\ \nonumber \end{eqnarray}  where $||t||= \max_{i=1,\cdots,2k} \vert t_i\vert$. We will use Theorem \ref{momentshifted} to handle that integral.  In order to do this, we have to control how the shifts $t_i$ are close to each other.

By a permutation change of the variables, we can assume that $t_1\leq t_2\leq \cdots\leq t_{2k}$. Indeed, the integral on $\mathcal{H}$ is equal to $(2k)!$ times the integral with this additional restriction. \\

For every $2k$- tuple $\overline{t}=(t_1,\cdots,t_{2k})$, define a $(2k-1)$- tuple $\overline{j}=(j_1,\cdots, j_{2k-1})$ where $j_{i}= \min\{ i+1\leq j\leq 2k,  \vert t_i-t_j\vert >\frac{1}{\log q}\}$. If for some $i$, no such $j$ exists, we set $j_i=2k+1$. In the following, we will say that $\overline{t}=(t_1,\cdots,t_{2k})$ is of type $\overline{j}$. Let us give few remarks about that definition.  First of all, we have to think about $j_i$ as the first occurrence of a shift lying far from $t_i$. Furthermore, notice that $2\leq j_1\leq j_2 \leq \cdots \leq j_{2k-1} \leq 2k+1$ and that we can split the domain of integration $\mathcal{H}$ in a disjoint union $\mathcal{H}=\cup \mathcal{H}_{\overline{j}}$ of $2k$-tuples $\overline{t}=(t_1,\cdots,t_{2k})$ where $\overline{t}$ is of type $\overline{j}$. Hence, proving Theorem \ref{upperGRH} reduces to bound the contribution of the integral over $\overline{t}$ of type $\overline{j}$ for all possible choices of $\overline{j}$. \\

 The strategy is to apply Theorem \ref{shiftedmoments} in a appropriate way to obtain the expected bound. Using Theorem \ref{shiftedmoments}, we get that the contribution in (\ref{hypercube}) of $\overline{t}$ of type $\overline{j}$ is bounded by

\begin{equation}\label{integral shift} \phi(q) (\log q)^{k/2+\epsilon} \int\cdots\int_{\mathcal{H}_{\overline{j}}} \,\, \left(\prod_{i<j} E_{i,j}\right) \,dt_1 \cdots dt_{2k} \end{equation} where $E_{i,j}$ is defined in Theorem \ref{shiftedmoments}. For every $i=1,2,\cdots,2k-1$, we will essentially bound $\displaystyle{\prod_{j=i+1}^{2k}E_{i,j}}$ in two different ways, depending on whether the variable $t_i$ possesses a close shift or not.

\vspace{2mm} $\bullet$ \underline{Case 1}: Close shifts.  \vspace{1mm} 

 If $t_i$ admits a close shift then $j_i>i+1$. Using the first case of Theorem \ref{shiftedmoments}, we have the following trivial bound 
 
 \begin{equation}\label{closeshifts}\left\vert \prod_{j=i+1}^{2k}E_{i,j} \right\vert \leq (\log q)^{\frac{2k-i}{2}}.\end{equation}

 $\bullet$ \underline{Case 2}: Isolated shifts.  \vspace{1mm} 
 
For those indices $i$, $t_i$ does not admit a close shift, which means that $j_i=i+1$. We remark that $\frac{1}{\vert t_i - t_j\vert} \leq \frac{1}{\vert t_i - t_{j_i}\vert}$ for $j\geq j_i$, since we have $t_1\leq t_2\leq \cdots\leq t_{2k}$. Hence, using again both cases of Theorem \ref{shiftedmoments}, we derive the following bound: 

\begin{equation}\label{isolated} \left\vert \prod_{j=i+1}^{2k}E_{i,j} \right\vert \leq \frac{1}{\vert t_i-t_{i+1}\vert^{\frac{(2k-i)}{2}}}(\log\log q)^{\frac{2k-i}{2}}. \end{equation} To deal with the integral in (\ref{integral shift}), we can make the following linear change of variables:

\begin{equation}\label{changevar}u_i=\begin{cases}t_i-t_{i+1} \,\,\,\text{         if   } i\leq 2k-1,  \\
t_{2k} \hspace{1cm}\text{   if   } i=2k. \\
  \end{cases}\end{equation} Thus, the determinant of the Jacobian being equal to $1$, the integral in (\ref{integral shift}) becomes
  
\begin{equation}\label{afterchange}(\log\log q)^{k(2k-1)}\prod_{i=1\atop i\vert j_i\neq i+1}^{2k-1}(\log q)^{\frac{2k-i}{2}} \int\cdots\int_{\mathcal{D}_{\overline{j}}}\prod_{i=1\atop i \vert j_i=i+1}^{2k-1}\frac{1}{\vert u_i\vert^{\frac{(2k-i)}{2}}}du_1\cdots du_{2k}  \end{equation} where the domain $\mathcal{D}_{\overline{j}}$ is included in 

$$\prod_{i\vert j_i\neq i+1} ] -1/ \log q, 0] \prod_{i\vert j_i=i+1} ] -\log^{\epsilon} q, -1/ \log q].$$ For those $i$ such that $j_i \neq i+1$, we bound the integral over $u_i$ by the length of the interval of integration $1/\log q$. For the other indices, we integrate explicitly on $] -\log^{\epsilon} q, -1/ \log q]$. In order to obtain the expected bound, we need to ``save" a logarithm for each integration $du_i$ for $i=1\cdots 2k-1$. An additional problem arises when the variable does not admit a close shift and we integrate $u_{2k-1}^{-1/2}$. Let us first treat the easiest case.

 \vspace{4mm}
 $\spadesuit \hspace{3mm}$ \underline{Subcase 1}: $j_{2k-1}\neq 2k$.  \vspace{2mm} 
 
  In that case, all the exponents in the denominator of the integral in (\ref{afterchange}) are greater than $1$. Therefore, we obtain after explicit integration that $(\ref{afterchange})$ is bounded by

  \begin{equation}\label{winlog} (\log\log q)^{k(2k-1)}\prod_{i\vert j_i\neq i+1} \frac{(\log q)^{\frac{2k-i}{2}}                                                 }{\log q}\prod_{i=1\atop i \vert j_i=i+1}^{2k-1}\frac{(\log q)^{\frac{2k-i}{2}}                                                 }{\log q} \log\log q \end{equation} where the factor $\log \log q$ comes from the possible integration of $1/u$ when $i=2k-2$. Hence, (\ref{integral shift}) is bounded by 
  
  $$ \phi(q) (\log q)^{f(k)+2k^2\epsilon}$$ where 
  
  \begin{align*} f(k) &=\frac{k}{2}+ \frac{1}{2}  \sum_{i=1}^{2k-1} (2k-i-2) \\
                    & = \frac{k}{2}+ \frac{1}{2} \sum_{i=-1}^{2k-3} i \\
                    & = \frac{k}{2}+ \frac{(2k-3)(2k-2)}{4} -\frac{1}{2} \\
                    & = (k-1)^2
   \end{align*} which proves Theorem \ref{upperGRH} in that case.

\vspace{1mm} 

$\spadesuit \hspace{3mm}$ \underline{Subcase 2}: $j_{2k-1}= 2k$.  \vspace{2mm} 

 The only remaining problem arises when $t_{2k-1}$ does not have a close shift. In that case, an explicit integration in (\ref{afterchange}) is not sufficient to save $\log q$ after integration, but only saves $\log^{1/2} q$. We are going to split the proof in two subcases depending on whether $t_1$ admits a close shift or not.
 
 \vspace{1mm}
 
$\clubsuit \hspace{3mm}$ \underline{Subsubcase 1}: $\fbox{$j_1 \neq 2$}$ 
\vspace{1mm}

 We will use exactly the same bounds as before except for $i=1$. The trivial inequality $\vert t_1-t_{2k}\vert^{-1/2} \leq \vert t_{2k-1}-t_{2k}\vert^{-1/2}$ together with the simple observation that $\vert t_{2k-1}-t_{2k}\vert$ is large (by hypothesis $j_{2k-1}=2k$) implies the following bound 
 
 \begin{equation}\label{modifE1}\prod_{j> 1} E_{1,j} \leq (\log q)^{\frac{2k-2}{2}}\frac{\log \log q}{\vert t_{2k-1}-t_{2k}\vert^{1/2}}.\end{equation} Doing the same change of variables as before in (\ref{changevar}) and using (\ref{modifE1}), we end up with the bound
 
 \begin{equation*}\label{afterchangebis}(\log \log q)^{k(2k-1)}(\log q)^{\frac{2k-2}{2}}\prod_{i=2\atop i\vert j_i\neq i+1}^{2k-2}(\log q)^{\frac{2k-i}{2}} \int\cdots\int_{\mathcal{D}_{\overline{j}}}\prod_{2=1\atop i \vert j_i=i+1}^{2k-2}\frac{1}{\vert u_i\vert^{\frac{(2k-i)}{2}}}\frac{1}{\vert u_{2k-1}\vert}du_1\cdots du_{2k}.\end{equation*} A slightly modification of the computation following (\ref{winlog}) enables us to obtain the expected bound $(\log q)^{(k-1)^2+\epsilon}$. 

 \vspace{1mm}

$\clubsuit \hspace{3mm}$ \underline{Subsubcase 2}: $\fbox{$j_1 = 2$}$ 
\vspace{1mm}

We proceed as in the previous subsubcase with the following bound (the $\log\log$ factor coming from the possible case where the shifts are far away from each other)

$$ \prod_{j> 1} E_{1,j}\ll \frac{(\log\log q)^{\frac{2k-1}{2}}}{\displaystyle{\prod_{j=2}^{2k}\vert t_1-t_j \vert^{1/2}}} \leq \frac{(\log\log q)^{\frac{2k-1}{2}}}{\vert t_1-t_2\vert^{\frac{2k-2}{2}}} \frac{1}{\vert t_1-t_{2k}\vert^{1/2}} \leq \frac{(\log\log q)^{\frac{2k-1}{2}}}{\vert t_1-t_2\vert^{\frac{2k-2}{2}}} \frac{1}{\vert t_{2k-1}-t_{2k}\vert^{1/2}}    $$

Doing the same change of variables as before in (\ref{changevar}), we end up with the same integral as in the previous subsubcase. Hence, the same computation works and this concludes the proof of Theorem \ref{upperGRH}.

  \vspace{3mm}

  \section*{Acknowledgements}

The author is grateful to Dimitri Dias and Igor E. Shparlinski for very helpful discussions and to Maksym Radziwill for pointing him the reference \cite{ChandeeShifts}.

\end{document}